\numberwithin{equation}{section}
\def\F{\mathbb F}
\theoremstyle{plain}
\newtheorem{theorem}{Theorem}[section]
\newtheorem{corollary}[theorem]{Corollary}
\newtheorem{proposition}[theorem]{Proposition}
\newtheorem{lemma}[theorem]{Lemma}
\theoremstyle{definition}
\newtheorem{remark}[theorem]{Remark}
\begin{document}

\title[Code loops]{Code loops in dimension at most $8$}

\author[O'Brien]{E.A.\ O'Brien}

\author[Vojt\v{e}chovsk\'y]{Petr Vojt\v{e}chovsk\'y}

\address[O'Brien]{Department of Mathematics, University of Auckland, Private Bag 92019, Auckland, New Zealand}

\email{e.obrien@auckland.ac.nz}

\address[Vojt\v{e}chovsk\'y]{Department of Mathematics, University of Denver, 2280 S Vine St, Denver, Colorado 80112, U.S.A.}

\email{petr@math.du.edu}

\keywords{Code loop, doubly even code, trilinear alternating form, Moufang loop, general linear group, sporadic group, the Monster group.}

\subjclass[2010]{Primary: 20N05. Secondary: 15A69, 20B25, 20B40.}

\begin{abstract}
Code loops are certain Moufang $2$-loops constructed from doubly even binary codes that play an important role in the construction of local subgroups of sporadic groups. More precisely, code loops are central extensions of the group of order $2$ by an elementary abelian $2$-group $V$ in the variety of loops such that their squaring map,  commutator map and associator map are related by combinatorial polarization and the associator map is a trilinear alternating form.

Using existing classifications of trilinear alternating forms over the field of $2$ elements, we enumerate code loops of dimension $d=\dim(V)\le 8$ (equivalently, of order $2^{d+1}\le 512$) up to isomorphism. There are $767$ code loops of order $128$, and $80826$ of order $256$, and $937791557$ of order $512$.
\end{abstract}

\thanks{O'Brien was supported by the Marsden Fund of New Zealand
via grant UOA 1323.  Vojt\v{e}chovsk\'y was
supported by Simons Foundation Collaboration Grant 210176 and PROF Grant of the University of Denver.}

\maketitle

\section{Introduction}

Code loops are certain Moufang $2$-loops constructed from doubly even binary codes that play an important role in the construction of local subgroups of sporadic groups \cite{Aschbacher, Conway, Griess}.

We enumerate the code loops of order $128$, $256$ and $512$ up to isomorphism, so extending the work of Nagy and Vojt\v{e}chovsk\'y \cite{NagyVojtechovsky64}. The results are summarized in Tables \ref{Tb:Main} and \ref{Tb:CLA}. The code loops can be constructed explicitly; those of orders dividing $256$ will be available in a future release of the \texttt{LOOPS} package \cite{LOOPS} for {\sf GAP} \cite{GAP}.

The theoretical results required for the classification of code loops were
described briefly in \cite{NagyVojtechovsky64}, in the context of a larger project of enumerating all Moufang loops of order $64$. Since our work suggests that it will be difficult to extend the classification of code loops beyond order $512$ (see Remark \ref{Rm:Feasible}), we carefully record the theory here.

In Section \ref{Sc:Preliminaries} we recall the necessary background material on Moufang loops, code loops, symplectic $2$-loops, trilinear alternating forms, combinatorial polarization and small Frattini Moufang loops. In particular, we recall that code loops, symplectic Moufang $2$-loops and small Frattini Moufang $2$-loops are the same objects. The group $GL(V)$ acts naturally on the set $F^V$ of maps $V\to F$ by
\begin{displaymath}
    f\mapsto f^S,\quad f^S(u) = f(uS^{-1}).
\end{displaymath}
We show that two code loops, realised as central extensions of the two-element field $F=\F_2$ by a vector space $V$ over $F$, are isomorphic if and only if their squaring maps $x\mapsto x^2$ (which can be realised as maps $V\to F$) lie in the same orbit of this action.

For $f\in F^V$ with $f(0)=0$, the $m$th derived form $f_m$ of $f$ is the symmetric map $V^m\to F$ defined by
\begin{equation}\label{Eq:DerivedForm}
    f_m(v_1,\dots,v_m) = \sum_{\emptyset\ne I\subseteq\{1,\dots,m\}} (-1)^{m-|I|}f(\sum_{i\in I}v_i).
\end{equation}
If $P:V\to F$ is the squaring map of a code loop $Q$, then $P_2 = C:V^2\to F$ is the commutator map of $Q$, $P_3=A:V^3\to F$ is the associator map of $Q$, and $P_4=0$. Let
\begin{displaymath}
    F^V_4 = \{f\in F^V\ |\ f(0)=0,\,f_4=0\},
\end{displaymath}
so that $F^V_4$ consists of maps $f:V\to F$ such that $f_3$ is a trilinear alternating form. The results of Section \ref{Sc:Preliminaries} imply that $f\in F^V$ is the squaring map of a code loop if and only if $f\in F^V_4$.

In Section \ref{Sc:Action} we therefore study the action of $GL(V)$ on $F^V$ restricted to $F^V_4$, whose
orbits are in one-to-one correspondence with code loops of order $n=2^{\dim(V)+1}$ up to isomorphism.
Suppose that $V$ has ordered basis $(e_1,\dots,e_d)$. A map $f\in F^V_4$ is uniquely determined by the values
\begin{equation}\label{Eq:Values}
    \omega_{i_1\cdots i_k} = f_k(e_{i_1},\dots,e_{i_k})\in F,
\end{equation}
where $1\le k\le 3$ and $1\le i_1<\cdots<i_k\le d$. Conversely, given arbitrary parameters $\omega_{i_1\cdots i_k}\in F$ for every $1\le k\le 3$ and $1\le i_1<\cdots<i_k\le d$, there is a unique map $f\in F^V_4$ such that \eqref{Eq:Values} holds.
The action of $GL(V)$ on $F^V_4$ is therefore equivalent to a certain action of $GL(V)$ on the parameter space
\begin{displaymath}
    \Omega_d = F^{\binom{d}{1}+\binom{d}{2}+\binom{d}{3}}.
\end{displaymath}
The action of $GL(V)$ on $\Omega_d$ is stratified in a sense defined in Section \ref{Ss:Stratified}, and its orbits can thus be calculated in three steps as in Corollary \ref{Cr:Main}, the result on which our calculations are based.

The first step is a linear action of $G=GL(V)$ on $F^{\binom{d}{3}}$, which is equivalent to the classification of trilinear alternating forms over $F$ in dimension $d$. It is not hard to calculate orbit representatives when $d\le 6$. For $d\ge 7$, we use the existing classifications of \cite{CohenHelminck} ($d=7$) and \cite{HoraPudlak} ($d=8$), where orbit representatives and the orders of their automorphism groups are given. We explicitly calculate the automorphism groups, so verifying these results. The trilinear alternating forms
$A:V^3\to F$ correspond to the associator maps in code loops.
Given a trilinear alternating form $A$, the second step amounts to
understanding the orbits and stabilizers of affine actions of
the stabilizer $G_A$ on $F^{\binom{d}{2}}$.
The resulting symmetric alternating maps $C:V^2\to F$ correspond to the
commutator maps in code loops.
Given a map $C$ from the second step, the third step consists of
calculating orbits of affine actions of the stabilizer $G_{A,C}$ on
$F^{\binom{d}{1}}$. The resulting maps $P:V\to F$ correspond to the
squaring maps in code loops.

Our results and information about the calculations are recorded
in Section \ref{Sc:Results}.

\section{Preliminaries}\label{Sc:Preliminaries}

\subsection{Loops and Moufang loops}

A nonempty set $Q$ with a binary operation $\cdot$ and an element $1$ is a \emph{loop} if $x\cdot 1 = 1\cdot x = x$ for every $x\in Q$ and if the translations $L_x:Q\to Q$, $L_x(y)=x\cdot y$ and $R_x:Q\to Q$, $R_x(y)=y\cdot x$ are bijections of $Q$ for every $x\in Q$. We often write $xy$ instead of $x\cdot y$.

A loop $Q$ is \emph{Moufang} if it satisfies the identity $x(y(xz)) = ((xy)x)z$. Moufang loops form a variety of loops with properties close to groups. For instance, any two elements of a Moufang loop generate a group, and if $x(yz)=(xy)z$ for some elements $x$, $y$, $z$ of a Moufang loop, then the subloop generated by $x$, $y$, $z$ is a group \cite{Moufang}. As a significant nonassociative example, nonzero octonions under multiplication form a Moufang loop.

For a prime $p$, a \emph{$p$-loop} is a loop whose order is a power of $p$.

\subsection{Code loops}

Code loops were introduced in 1986 by Griess \cite{Griess} as follows.

Let $F=\F_2$. Given $u=(u_1,\dots,u_d)\in F^d$, let $|u|=\sum_{i=1}^d u_i\in\mathbb Z$ be the Hamming weight of $u$. Given $u=(u_1,\dots,u_d)$ and $v=(v_1,\dots,v_d)\in F^d$, let $u\cap v = (u_1v_1,\dots,u_dv_d)$, where $u_iv_i=1$ if and only if $u_i=1=v_i$.

Let $V$ be a doubly even binary code, that is, a linear subspace of $F^d$ such that $|u|\equiv 0\pmod 4$ for every $u\in V$. Note that then $|u|/4$, $|u\cap v|/2$ and $|u\cap v\cap w|$ are integers for every $u$, $v$, $w\in V$. A function $\theta:V^2\to F$ is a \emph{factor set} if
\begin{align*}
    &\theta(u,u)\equiv |u|/4\pmod 2,\\
    &\theta(u,v)+\theta(v,u)\equiv |u\cap v|/2\pmod 2,\\
    &\theta(u,v)+\theta(u+v,w)+\theta(v,w)+\theta(u,v+w)\equiv |u\cap v\cap w|\pmod 2,
\end{align*}
for all $u$, $v$, $w\in V$.

A \emph{code loop} $Q$ is a loop defined on $F\times V$ by
\begin{displaymath}
    (a,u)(b,v) = (a+b+\theta(u,v),u+v),
\end{displaymath}
where $V$ is a doubly even binary code and $\theta:V^2\to F$ is a factor set.

Griess \cite{Griess} proved that every doubly even code admits a factor set, the isomorphism type of a code loop over $V$ is independent of the choice of the factor set $\theta$, and every code loop is Moufang. Furthermore, he showed that code loops correspond to a certain class of loops considered by Parker (see \cite[Definition 13, Theorem 14]{Griess}), which were in turn used by Conway as one of the key steps in the construction of the Monster sporadic group \cite{Conway}. In the language of code loops, the Parker loop for the Monster group is the code loop associated with the extended binary Golay code.

\subsection{Symplectic $2$-loops}

The connection between sporadic groups and Moufang $2$-loops was reinforced by Aschbacher \cite[Chapters 4 and 10]{Aschbacher}. To explain his results on loops, we first introduce central extensions of loops and symplectic $2$-loops.

The \emph{center} of a loop $Q$ consists of all $x\in Q$ such that $xy=yx$, $x(yz)=(xy)z$, $y(xz)=(yx)z$ and $y(zx)=(yz)x$ for every $y$, $z\in Q$. The center $Z(Q)$ is a normal subloop of $Q$ (that is, a kernel of a loop homomorphism).

A loop $Q$ is a \emph{central extension} of an abelian group $(Z,+)$ by a loop $(V,+)$ if $Z\le Z(Q)$ and $Q/Z$ is isomorphic to $V$. Up to isomorphism, all central extensions of $Z$ by $V$ are obtained as loops $Q(Z,V,\theta)$ defined on $Z\times V$ with multiplication
\begin{displaymath}
    (a,u)(b,v) = (a+b+\theta(u,v),u+v),
\end{displaymath}
where $\theta:V^2\to Z$ is a map satisfying $\theta(0,u)=\theta(u,0)=0$ for every $u\in V$.

The \emph{commutator} $C(x,y)$ of $x, y\in Q$ is the unique element of $Q$ such that $xy = (yx)C(x,y)$, and the \emph{associator} $A(x,y,z)$ of $x, y, z \in Q$ is the
unique element of $Q$ such that $(xy)z = (x(yz))A(x,y,z)$.

Following \cite{Aschbacher}, a loop $Q$ is
a \emph{symplectic $2$-loop} if it is a central extension of the $2$-element group $(Z,+)=(\F_2,+)$ by an elementary abelian $2$-group $(V,+)$. Direct calculation shows that the squaring map $P:Q\to Q$, $x\mapsto x^2$, the commutator map $C:Q^2\to Q$, $(x,y)\mapsto C(x,y)$, and the associator map $A:Q^3\to Q$, $(x,y,z)\mapsto A(x,y,z)$ are given by
\begin{align*}
    P(a,u) &= (\theta(u,u),0),\\
    C((a,u),(b,v)) &= (\theta(u,v)+\theta(v,u),0),\\
    A((a,u),(b,v),(c,w)) &= (\theta(u,v)+\theta(u+v,w)+\theta(v,w)+\theta(u,v+w),0).
\end{align*}
All three maps are therefore well-defined modulo $Z$, and can be viewed as maps $P:V\to \F_2$, $C:V^2\to \F_2$, $A:V^3\to \F_2$.

Comparing this with Griess' definition of a factor set, we see that code loops are symplectic $2$-loops over a doubly even binary code $(V,+)$ in which the squaring map, the commutator map and the associator map are governed by natural intersection properties of codewords of $V$.

\subsection{Trilinear alternating forms}

Let $V$ be a vector space over a field $F$. A map $f:V^m\to F$ is \emph{symmetric} if $f(v_1,\dots,v_m) = f(v_{\pi(1)},\dots,v_{\pi(m)})$ for every $v_1$, $\dots$, $v_m\in V$ and every $\pi\in S_m$; it is \emph{$m$-linear} if it is linear in every coordinate; and it is \emph{alternating} if $f(v_1,\dots,v_m)=0$ whenever $v_i=v_j$ for some $1\le i<j\le m$.
If $F$ has characteristic 2, then it is easily seen that every $m$-linear alternating form is symmetric.

Two $m$-linear alternating forms $f$, $g:V^m\to F$ are \emph{equivalent} if there is $S\in GL(V)$ such that $f(v_1,\dots,v_m) = g(v_1S,\dots,v_mS)$ for every $v_1$, $\dots$, $v_m\in V$.
Trilinear alternating forms over $\F_2$ have been classified up to equivalence in dimensions $d=\dim(V)\le 8$; see \cite{CohenHelminck} for $d\le 7$ and \cite{HoraPudlak} for $d=8$.

Suppose $(e_1,\dots,e_d)$ is an ordered basis of $V$. A trilinear alternating form $f:V^3\to \F_2$ is determined by the values $f(e_{i_1},e_{i_2},e_{i_3})$ with $1\le i_1<i_2<i_3\le d$. As usual, we therefore represent alternating forms in a compact notation that
%determines on
indicates
for which triples $(e_{i_1},e_{i_2},e_{i_3})$ the form does not vanish.
For instance, $f=123+145$ is the form $f:V^3\to\F_2$ such that $f(e_{i_1},e_{i_2},e_{i_3})=1$ if and only if $\{i_1,i_2,i_3\}\in\{\{1,2,3\}$, $\{1,4,5\}\}$.

\subsection{Combinatorial polarization}

To identify Moufang loops among symplectic $2$-loops, we need the notion of combinatorial polarization due to Ward \cite{Ward}. We follow the terminology and notation of \cite{Aschbacher}.

Let $F$ be a field, $V$ a vector space over $F$, and $f:V\to F$ a map satisfying $f(0)=0$. For $m\ge 1$, the \emph{$m$th derived form} of $f$ is the map $f_m:V^m\to F$ defined by \eqref{Eq:DerivedForm}. Note that \eqref{Eq:DerivedForm} is an analog of the principle of inclusion and exclusion for maps. For instance,
\begin{displaymath}
    f_3(v_1,v_2,v_3) = f(v_1+v_2+v_3) - f(v_1+v_2) - f(v_1+v_3) - f(v_2+v_3) + f(v_1) + f(v_2) + f(v_3).
\end{displaymath}
Further note that $f_1=f$ and every $f_m$ is symmetric.
The maps $f_1$, $f_2$, $\dots$ are \emph{related by polarization}.
It is not difficult to show that if $f_1$, $f_2$, $\dots$ are related by
polarization, then
\begin{equation}\label{Eq:Inductive}
    f_{m+1}(v_1,\dots,v_{m+1}) =
    f_m(v_1+v_2,v_3,\dots,v_{m+1}) {-} f_m(v_1,v_3,\dots,v_{m+1}) {-} f_m(v_2,v_3,\dots,v_{m+1})
\end{equation}
for every $m\ge 1$ and $v_1$, $\dots$, $v_{m+1}\in V$.
Conversely, if $f_m:V^m\to F$ are symmetric maps satisfying \eqref{Eq:Inductive} for every $m\ge 1$ and $v_1$, $\dots$, $v_{m+1}\in V$, then $f_1$, $f_2$, $\dots$ are related by polarization.

We record additional consequences of \eqref{Eq:Inductive} whenever $f_1$, $f_2$, $\dots$ are related by polarization:
\begin{itemize}
\item $f_{m+1}=0$ if and only if $f_m$ is additive in every coordinate,
\item $f_m(v_1,\dots,v_m)=0$ whenever $v_i=0$ for some $i$,
\item if $F$ has characteristic $2$ then $f_m$ is alternating.
\end{itemize}
Consequently, if $F=\F_2$ then $f_{m+1}=0$ if and only if $f_m$ is an $m$-linear alternating form.

\subsection{Symplectic Moufang $2$-loops}

Let $V$ be a vector space over $F=\F_2$. The group $GL(V)$ acts on the space of maps $f:V\to F$ by
\begin{displaymath}
    f^S(v) = f(vS^{-1}),
\end{displaymath}
where $S\in GL(V)$ and $v\in V$. This is indeed an action since $f^{ST}(v) = f(v(ST)^{-1}) = f(vT^{-1}S^{-1}) = f^S(vT^{-1}) = (f^S)^T(v)$. (See
Section \ref{Ss:RelatedActions} for a discussion of related actions.)

Aschbacher \cite[Lemma 13.1, Lemma 13.5, Theorem 13.7]{Aschbacher}
proved the following.
\begin{itemize}
\item A symplectic $2$-loop $Q = Q(F,V,\theta)$ is Moufang if and only if the maps $P$, $C$, $A$ are related by polarization, that is, $C=P_2$ and $A=P_3$.
\item Given a map $f:V\to F$ such that $f(0)=0$, there is a symplectic Moufang $2$-loop with $P=f$ if and only if $f_4=0$, or, equivalently, if and only if $f_3$ is a trilinear alternating form.
\item Two symplectic Moufang $2$-loops over $V$ are isomorphic if and only if their squaring maps are in the same orbit of the action of $GL(V)$ on $F_4^V$.
\end{itemize}

\subsection{Small Frattini Moufang loops}

Hsu \cite{Hsu} showed that the code loops of Griess are precisely the
symplectic Moufang $2$-loops of Aschbacher.

Let $p$ be a prime and let $Q$ be a Moufang $p$-loop. Glauberman and Wright
\cite{Glauberman, GlaubermanWright}
showed that $Q$ is centrally nilpotent.
Let $\Phi(Q)$ be the \emph{Frattini subloop} of $Q$, consisting of all nongenerators of $Q$. Bruck \cite{Bruck} showed that $\Phi(Q)$ is the smallest normal subloop of $Q$ such that $Q/\Phi(Q)$ is an elementary abelian $p$-group. Following \cite{Hsu}, a Moufang $p$-loop $Q$ is \emph{small Frattini} if $|\Phi(Q)|\in \{1,p\}$, and an associative small Frattini Moufang $p$-loop
is a \emph{small Frattini group}. It was shown in \cite{Hsu} that $\Phi(Q)\le Z(Q)$ in every small Frattini Moufang $p$-loop $Q$. Hence the small Frattini Moufang $2$-loops are precisely the symplectic Moufang $2$-loops. Hsu \cite{Hsu} also proved the following.
\begin{itemize}
\item A nonassociative small Frattini Moufang $p$-loop exists if and only if $p\in\{2,3\}$.
\item Small Frattini Moufang $2$-loops (hence symplectic Moufang $2$-loops) are precisely the code loops.
\end{itemize}

\subsection{Related results}

Chein and Goodaire \cite{CheinGoodaire} used an intricate combinatorial construction to show that code loops are precisely Moufang loops with at most two squares. Their construction is used in \cite{Hsu} and generalized in \cite{Vojtechovsky}.

If $f:V^3\to \F_2$ is a trilinear alternating form, then its \emph{radical} is $\{v_1\in V\ |\ f(v_1,v_2,v_3)=0$ for every $v_2$, $v_3\in V\}$.
The radical and \emph{radical polynomial} are key invariants of trilinear alternating forms \cite{HoraPudlak}. Dr\'apal and Vojt\v{e}chovsk\'y \cite{DrapalVojtechovsky} showed that if a Moufang loop $Q$ has an associator map with trivial radical that is equivalent to an associator map of a code loop, then $Q$ is also a code loop. Moreover, they proved that two code loops with equivalent associators can be obtained from one another by a sequence of two kinds
of constructions, known as cyclic and dihedral modifications.

Hsu \cite{HsuExplicit} presented an iterative construction that builds a code loop from a given doubly even binary code. Nagy \cite{Nagy} presented a global construction of code loops based on groups with triality associated with Moufang loops. His construction can
be used to construct explicitly code loops from the
parameters \eqref{Eq:Values}.

We do not pursue here the connections between code loops and sporadic groups \cite{Aschbacher, Conway, Griess}. In this direction, the notion of a code loop has been extended to odd primes in \cite{Griess27, Richardson}. An attempt to unify the theory of code loops for $p=2$ and $p$ odd has been made in \cite{DrapalVojtechovskyCode}.

\section{The action of $GL(V)$}\label{Sc:Action}

Combining the results of Aschbacher and Hsu, we obtain the following.

\begin{theorem}[\cite{Aschbacher}, \cite{Hsu}]\label{Th:AH}
Let $d\ge 3$, $F=\F_2$, $V=F^d$, and let $F^V_4=\{f:V\to F\ |\ f(0)=0$ and $f_4=0\}$. The code loops of order $2^{d+1}$ up to isomorphism are in one-to-one correspondence with orbits of the action of $GL(V)$ on $F^V_4$ given by $f^S(u) = f(uS^{-1})$.
\end{theorem}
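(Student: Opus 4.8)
The plan is to assemble the cited theorems of Hsu and Aschbacher. The only work beyond quotation is to reconcile the combinatorial definition of a code loop (built from a doubly even binary code) with the extension-theoretic description of a symplectic Moufang $2$-loop over a fixed vector space $V=F^d$, and then to check that the resulting assignment of $GL(V)$-orbits to loops is a genuine bijection.

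First I would pin down the class of loops. By Hsu's theorem (recalled above), a loop is a code loop if and only if it is a symplectic Moufang $2$-loop. A symplectic $2$-loop is, by definition, a central extension of $(\F_2,+)$ by an elementary abelian $2$-group $W$; when $|W|=2^d$ the loop has order $2^{d+1}$, and since all elementary abelian $2$-groups of a given order are isomorphic we may take $W=V=F^d$. Thus, up to isomorphism, the code loops of order $2^{d+1}$ are precisely the loops $Q=Q(F,V,\theta)$ for which $\theta$ is a factor set making $Q$ Moufang.

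Next I would pass from the factor set to the squaring map. For $Q=Q(F,V,\theta)$ the formula $P(a,u)=(\theta(u,u),0)$ yields a well-defined map $P:V\to F$ with $P(0)=0$. By Aschbacher's existence criterion, the maps $f:V\to F$ that occur as the squaring map of some symplectic Moufang $2$-loop over $V$ are exactly the members of $F^V_4$; in particular $P\in F^V_4$ for every code loop of order $2^{d+1}$, and every member of $F^V_4$ is realised. By Aschbacher's isomorphism criterion, two symplectic Moufang $2$-loops over $V$ are isomorphic if and only if their squaring maps lie in the same orbit of the (already verified) action $f\mapsto f^S$, $f^S(u)=f(uS^{-1})$, of $GL(V)$ on $F^V_4$. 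Combining these two statements, the rule sending a code loop $Q$ of order $2^{d+1}$ to the $GL(V)$-orbit of its squaring map is well defined on isomorphism classes, injective, and surjective onto the set of orbits, which is the asserted one-to-one correspondence.

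I expect the substantive content to be entirely absorbed into the cited results, so that the argument above is essentially bookkeeping; the one point that still needs attention is well-definedness. An abstract code loop admits many realisations as $Q(F,V,\theta)$ — the central $\F_2$ and the identification of the quotient with $V$ are not canonical, and $\theta$ is pinned down only up to the freedom allowed by the factor-set relations — so one must know that all of them land in a single orbit. This is exactly what Aschbacher's isomorphism criterion supplies, and its proof (which I would not reproduce) rests on showing that a symplectic Moufang $2$-loop is determined up to isomorphism by its squaring map. Granting Hsu's identification together with these two results of Aschbacher, no further argument is required.
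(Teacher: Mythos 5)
Your proposal is correct and follows essentially the same route as the paper, which offers no separate proof of Theorem \ref{Th:AH} beyond assembling Hsu's identification of code loops with symplectic Moufang $2$-loops and Aschbacher's existence and isomorphism criteria quoted in Section \ref{Sc:Preliminaries}. Your explicit attention to well-definedness across the different realisations $Q(F,V,\theta)$ of a single abstract loop is a point the paper leaves implicit, but it is resolved exactly as you say, by Aschbacher's isomorphism criterion.
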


In Section \ref{polar} we show that a map $f\in F^V_4$ can be
described by $\binom{d}{1} + \binom{d}{2} + \binom{d}{3}$ parameters in $F$. In Section \ref{Ss:Action} we exhibit the action of $GL(V)$ on the parameter space $\Omega_d=F^{\binom{d}{1} + \binom{d}{2} + \binom{d}{3}}$ that is equivalent to the action of $GL(V)$ on $F^V_4$. In Section \ref{Ss:Stratified} we show how to iteratively calculate the orbits of $GL(V)$ on $\Omega_d$ by restricting the action to various subspaces of $\Omega_d$.

\subsection{Combinatorial polarization with a fixed basis}\label{polar}

Let $F=\F_2$, and let $V$ be a vector space over $F$ with ordered basis $(e_1,\dots,e_d)$. For $u = \sum_{i=1}^d u_ie_i$ let
\begin{displaymath}
    |u| = \sum_{i=1}^d u_i\in\mathbb Z.
\end{displaymath}

\begin{lemma}\label{Lm:ParamsDetermine}
Let $V$ be a vector space over $F=\F_2$ with
ordered basis $(e_1,\dots,e_d)$. If $f:V\to F$ satisfies $f(0)=0$ and $f_{m+1}=0$, then
$f$ is uniquely determined by the parameters \eqref{Eq:Values} with $1\le k\le m$ and $1\le i_1<i_2<\cdots <i_k\le d$.
\end{lemma}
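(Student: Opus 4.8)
The plan is to prove the lemma by induction on $m$, with the key observation being a formula that reconstructs $f(u)$ from the derived forms $f_1,\dots,f_m$ evaluated on basis vectors.

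First I would establish the base case $m=1$: if $f_2=0$, then $f_1=f$ is additive, so $f(u)=f(\sum_{i=1}^d u_i e_i)=\sum_{i=1}^d u_i f(e_i)=\sum_{i\,:\,u_i=1} \omega_i$, which is determined by the parameters $\omega_i=f_1(e_i)$. More generally, the heart of the argument is the claim that whenever $f_{m+1}=0$, one has
\begin{equation}\label{Eq:Reconstruct}
    f(u) = \sum_{k=1}^{m} \sum_{\{i_1<\cdots<i_k\}\subseteq \mathrm{supp}(u)} \binom{?}{?}\,\omega_{i_1\cdots i_k},
\end{equation}
or rather, since we are over $\F_2$, a cleaner route: by the consequences of polarization recorded in the excerpt, $f_{m+1}=0$ implies $f_m$ is $m$-linear (and alternating). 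Then I would expand $f_m(u,u,\dots,u)$? No — the slicker approach is to expand using multilinearity of $f_m$ on $u=\sum u_i e_i$ to show $f_m$ is determined on all of $V^m$ by the $\omega_{i_1\cdots i_m}$, and then use the inductive relation \eqref{Eq:Inductive} (applied in the form expressing $f_{m}$ via $f_{m-1}$, etc.) together with the definition \eqref{Eq:DerivedForm} to peel $f$ down: \eqref{Eq:DerivedForm} with $v_1=\cdots=v_m=u$ would be degenerate, so instead I use the telescoping identity $f(v_1+\cdots+v_m) = \sum_{\emptyset\ne I} (-1)^{m-|I|+1} f_{|I|}(\text{stuff})$ — more precisely, inverting \eqref{Eq:DerivedForm} by inclusion–exclusion.

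So the cleanest line: \emph{inclusion–exclusion inversion.} Fix $u$ with support $S=\{i\in\{1,\dots,d\}: u_i=1\}$, so $u=\sum_{i\in S} e_i$. By repeatedly applying \eqref{Eq:DerivedForm} (or \eqref{Eq:Inductive}) I claim that $f(u)$ equals an explicit $\F_2$-linear combination of the values $f_k(e_{i_1},\dots,e_{i_k})$ with $\{i_1,\dots,i_k\}\subseteq S$ and $k\le m$; intuitively $f(\sum_{i\in S}e_i)$ is obtained by "integrating" the known lower forms, and the condition $f_{m+1}=0$ (equivalently $f_m$ multilinear) is exactly what makes this integration terminate at level $m$ rather than requiring the unknown $f_{m+1},\dots$. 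Concretely, for $|S|\le m$ one can write $f(u)$ directly via \eqref{Eq:DerivedForm}: $f_{|S|}(e_{i_1},\dots,e_{i_{|S|}}) = \sum_{\emptyset\ne I\subseteq S}(-1)^{|S|-|I|} f(\sum_{i\in I}e_i)$, and a downward induction on $|S|$ solves for $f(u)=f(\sum_{i\in S}e_i)$ in terms of $\omega_S$ and the $f(\sum_{i\in I}e_i)$ with $I\subsetneq S$, which are handled by the inductive hypothesis on $|S|$; for $|S|>m$ one uses that $f_m$ is $m$-linear to expand $f_m$ on sub-sums and reduce, again by induction on $|S|$, to the case $|S|\le m$.

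The main obstacle I anticipate is purely bookkeeping: setting up the double induction (on $m$ in the statement, and on $|\mathrm{supp}(u)|$ inside) so that the reductions are genuinely to strictly smaller instances, and checking the combinatorial coefficients collapse correctly over $\F_2$. The conceptual content is light — it is just that $f_{m+1}=0$ forces $f_m$ to be multilinear, hence determined by its values on basis tuples, and inclusion–exclusion then back-solves for $f$ — but writing it so that every step is manifestly valid requires care with the indexing of subsets of $\{1,\dots,d\}$. I would present the proof as: (i) recall $f_{m+1}=0 \Rightarrow f_m$ is $m$-linear alternating; (ii) deduce $f_m$ is determined on $V^m$ by the parameters \eqref{Eq:Values} with $k=m$; (iii) prove by induction on $|\mathrm{supp}(u)|$ that $f(u)$ is determined by all parameters \eqref{Eq:Values} with $k\le m$, using \eqref{Eq:DerivedForm} to express $f(u)$ in terms of $f_{\min(|\mathrm{supp}(u)|,m)}$ on basis vectors and $f$ on vectors of strictly smaller support. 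A separate base induction on $m$ is then not even needed, since step (iii)'s induction on support size handles everything once (i)–(ii) are in place.
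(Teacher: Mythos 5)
Your proposal is correct and, once the false starts are stripped away, coincides with the paper's proof: both induct on the Hamming weight $|\mathrm{supp}(u)|$ and invert the defining formula \eqref{Eq:DerivedForm} to write $f(u)$ as $f_k(e_{i_1},\dots,e_{i_k})$ plus $f$-values on vectors of strictly smaller support, where the former is a given parameter when $k\le m$ and vanishes when $k>m$ (since $f_{m+1}=0$ forces all higher derived forms to vanish via \eqref{Eq:Inductive}). The outer induction on $m$ and the detour through multilinearity of $f_m$ are, as you observe yourself at the end, unnecessary.
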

\begin{proof}
We show by induction on $|u|$ that $f(u)$ is determined for every $u\in V$. If $|u|=0$ then $u=0$ and $f(0)=0$ is given. Suppose that $|u|>0$ and $f(v)$ is determined for every $v\in V$ with $|v|<|u|$. Then there exist
$i_1<\cdots <i_k$ such that
$u = \sum_i u_ie_i = e_{i_1}+\cdots+ e_{i_k}$. By \eqref{Eq:DerivedForm},
\begin{displaymath}
    f(u) = f_k(e_{i_1},\dots,e_{i_k}) + \sum_I f(\sum_{i\in I}u_ie_i)
    = \omega_{i_1\cdots i_k} + \sum_I f(\sum_{i\in I}u_ie_i),
\end{displaymath}
where the summation runs over all nonempty, proper subsets $I$ of $\{i_1,\dots,i_k\}$. By the induction assumption, $f(\sum_{i\in I}u_ie_i)$ is known for each such subset $I$. If $k>m$ then $\omega_{i_1\cdots i_k}=0$. Otherwise $k\le m$ and $\omega_{i_1\cdots i_k}$ is one of the given parameters.
\end{proof}

The following result is essentially \cite[Theorem 8.3]{Hsu} combined with
the remarks therein. It allows us to reconstruct explicitly the symmetric maps $f$, $f_2$ and $f_3$ from the parameters \eqref{Eq:Values} whenever $f_4=0$.

\begin{proposition}\label{Pr:Formulas}
Let $V$ be a vector space over $F=\F_2$ with ordered basis $(e_1,\dots,e_d)$.
If $f:V\to F$ satisfies $f(0)=0$ and $f_4=0$, then
\begin{align}
    f_3&(\sum_i x_ie_i,\sum_j y_je_j, \sum_k z_ke_k)\notag\\
    &= \sum_{i,j,k}x_iy_jz_k \omega_{ijk}\label{Eq:f3}\\
    &= \sum_{i<j<k} (x_iy_jz_k+x_iy_kz_j+x_jy_iz_k+x_jy_kz_i+x_ky_iz_j+x_ky_jz_i)\omega_{ijk}\notag
\end{align}
for every $x_i$, $y_j$, $z_k\in F$,
\begin{align}
    f_2&(\sum_i x_ie_i, \sum_j y_je_j)\notag\\
    &=  \sum_{i,j}x_iy_j\omega_{ij} + \sum_k\sum_{i<j} x_ix_jy_k\omega_{ijk} + \sum_i\sum_{j<k}x_iy_jy_k\omega_{ijk}\label{Eq:f2}\\
    &= \sum_{i<j}(x_iy_j+x_jy_i)\omega_{ij}
    + \sum_{i<j<k}(x_ix_jy_k+x_ix_ky_j+x_jx_ky_i + x_iy_jy_k+x_jy_iy_k+x_ky_iy_j)\omega_{ijk}\notag
\end{align}
for every $x_i$, $y_j\in F$, and
\begin{equation}\label{Eq:f}
    f(\sum_i x_ie_i) = \sum_i x_i\omega_i + \sum_{i<j} x_ix_j\omega_{ij} + \sum_{i<j<k} x_ix_jx_k \omega_{ijk}
\end{equation}
for every $x_i\in F$.
\end{proposition}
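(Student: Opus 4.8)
The plan is to establish \eqref{Eq:f3}, then \eqref{Eq:f}, then \eqref{Eq:f2}, using only facts recorded before the statement: since $f_4=0$, the form $f_3$ is trilinear and alternating and $f_m=0$ for all $m\ge 4$ (an immediate induction on \eqref{Eq:Inductive}); every derived form $f_m$ is symmetric and vanishes whenever one of its arguments is $0$; and since $F$ has characteristic $2$, every $f_m$ is alternating. It is convenient to extend the notation \eqref{Eq:Values} by setting $\omega_{i_1\cdots i_k}=f_k(e_{i_1},\dots,e_{i_k})$ for arbitrary indices; by the above, $\omega_{i_1\cdots i_k}$ is symmetric in its indices and vanishes as soon as two of them coincide. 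Formula \eqref{Eq:f3} is then immediate: trilinearity gives $f_3(\sum_i x_ie_i,\sum_j y_je_j,\sum_k z_ke_k)=\sum_{i,j,k}x_iy_jz_k\,\omega_{ijk}$, which is the first line, and collecting the (at most six) nonzero terms arising from the orderings of a fixed triple $i<j<k$ gives the second line.

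For \eqref{Eq:f} I would refine the proof of Lemma~\ref{Lm:ParamsDetermine}. Writing $J$ for the support of $u$, that proof exhibits the recursion $f(u)=\omega_J+\sum_I f(\sum_{i\in I}e_i)$, the sum running over the nonempty proper subsets $I$ of $J$ (the signs in \eqref{Eq:DerivedForm} being irrelevant over $\F_2$). Solving this recursion by induction on $|J|$ yields the closed form $f(u)=\sum_{\emptyset\ne K\subseteq J}\omega_K$: substituting the inductive hypothesis, the coefficient of $\omega_K$ for a fixed nonempty proper subset $K$ of $J$ equals the number of $I$ with $K\subseteq I\subseteq J$ and $I\ne J$, namely $2^{|J\setminus K|}-1$, which is odd, while $\omega_J$ occurs exactly once. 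Since $\omega_K=0$ as soon as $|K|\ge 4$, rewriting the condition ``$i\in J$'' as ``$x_i=1$'' turns $f(u)=\sum_{\emptyset\ne K\subseteq J}\omega_K$ into \eqref{Eq:f}. In fact this computation proves the following statement, which is the only nontrivial ingredient of the whole argument: if $g:V\to F$ satisfies $g(0)=0$ and $g_{m+1}=0$, then $g(\sum_{i\in J}e_i)=\sum_{\emptyset\ne K\subseteq J}g_{|K|}(e_k:k\in K)$, the derived forms of $g$ of arity greater than $m$ being $0$. I shall use it once more, with $m=2$.

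For \eqref{Eq:f2} fix $w=\sum_j y_je_j$ and set $g(v)=f_2(v,w)$. Then $g(0)=f_2(0,w)=0$, and \eqref{Eq:Inductive} together with the symmetry of $f_2$ gives $g(v_1+v_2)-g(v_1)-g(v_2)=f_3(v_1,v_2,w)$; hence $g_2(v_1,v_2)=f_3(v_1,v_2,w)$ is bilinear, so $g_3=0$. Applying the displayed statement to $g$ with $m=2$ gives $g(\sum_i x_ie_i)=\sum_i x_ig(e_i)+\sum_{i<j}x_ix_j\,g_2(e_i,e_j)$, where $g_2(e_i,e_j)=f_3(e_i,e_j,w)=\sum_k y_k\omega_{ijk}$ by trilinearity in the last slot, and where $g(e_i)=f_2(e_i,w)$ is expanded in exactly the same way, this time fixing $e_i$ in the first slot, to give $g(e_i)=\sum_j y_j\omega_{ij}+\sum_{j<k}y_jy_k\omega_{ijk}$. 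Substituting these expressions and grouping the resulting monomials by their bidegree in the $x$'s and the $y$'s gives the first line of \eqref{Eq:f2}; collecting the orderings of each pair $i<j$ and each triple $i<j<k$ gives the second line.

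The steps to watch are the multiplicity count $2^{|J\setminus K|}-1$ in the displayed statement — where it is essential that all arithmetic is modulo $2$ — and the final regrouping of monomials for \eqref{Eq:f2}; both are routine provided one keeps track of the (anti)symmetry of the extended symbols $\omega_{i_1\cdots i_k}$. Everything else is formal.
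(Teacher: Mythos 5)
Your proof is correct, but for \eqref{Eq:f2} and \eqref{Eq:f} it takes a genuinely different route from the paper's. The paper obtains \eqref{Eq:f3} from trilinearity exactly as you do, but then proves \eqref{Eq:f2} by induction on $|x|+|y|$ (splitting $x=x'+x''$ into vectors of smaller support, applying \eqref{Eq:Inductive}, and checking that the resulting polynomial expressions match), and only afterwards proves \eqref{Eq:f} by induction on $|x|$ using \eqref{Eq:f2}. You reverse that order and instead isolate a single M\"obius-inversion identity --- $g(\sum_{i\in J}e_i)=\sum_{\emptyset\ne K\subseteq J}g_{|K|}(e_k:k\in K)$ for any $g$ with $g(0)=0$, truncated by $g_{m+1}=0$ --- obtained by running the recursion from the proof of Lemma \ref{Lm:ParamsDetermine} forward with the parity count $2^{|J\setminus K|}-1$. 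Formula \eqref{Eq:f} is then an immediate instance, and \eqref{Eq:f2} follows by currying: freezing $w$ in $g(v)=f_2(v,w)$ gives $g_2=f_3(\cdot,\cdot,w)$ bilinear, hence $g_3=0$, so the same identity applies with $m=2$ (and once more in the other slot to expand $f_2(e_i,w)$). Your route buys a cleaner, reusable lemma (it would give the analogous formula for any $m$) and avoids the polynomial bookkeeping that the paper partly leaves as an exercise; the paper's route stays closer to the target formulas and displays the polarization recursion \eqref{Eq:Inductive} explicitly at each weight. The steps you flag as delicate --- the oddness of $2^{|J\setminus K|}-1$, the symmetry and vanishing on repeated indices of the extended symbols $\omega_{i_1\cdots i_k}$, and the final regrouping of monomials --- all check out.
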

\begin{proof}
In both \eqref{Eq:f3} and \eqref{Eq:f2} the second equality follows from the fact that each $f_k$ is symmetric and alternating. We therefore prove only the first equality in each of \eqref{Eq:f3}, \eqref{Eq:f2}, \eqref{Eq:f}.

Since $f_4=0$, the form $f_3$ is trilinear and \eqref{Eq:f3} follows.

We prove \eqref{Eq:f2} by induction on $|x|+|y|$. If $|x|\le 1$ and $|y|\le 1$, the result immediately follows because the sums with $i<j$ and $j<k$ are vacuous, and the first sum involves at most one summand. By symmetry, we can now assume without loss of generality that $|x|\ge 2$ and write $x=x'+x''$, where $|x'|<|x|$ and $|x''|<|x|$. By \eqref{Eq:Inductive}, the inductive assumption and \eqref{Eq:f3},
\begin{align*}
    f_2(\sum_i (x_i'&+x_i'')e_i,\sum_j y_je_j)\\
    &= f_2(\sum_i x_i'e_i,\sum_j y_je_j) + f_2(\sum_i x_i''e_i,\sum_j y_je_j)    + f_3(\sum_i x_i'e_i,\sum_j x_j''e_j,\sum_k y_ke_k)\\
    &= \sum_{i,j}x_i'y_j\omega_{ij} + \sum_k\sum_{i<j} x_i'x_j'y_k\omega_{ijk} + \sum_i\sum_{j<k} x_i'y_jy_k\omega_{ijk}\\
    &\quad+ \sum_{i,j} x_i''y_j\omega_{ij} + \sum_k\sum_{i<j}x_i''x_j''y_k\omega_{ijk} + \sum_i\sum_{j<k} x_i''y_jy_k\omega_{ijk}\\
    &\quad+ \sum_{i,j,k}x_i'x_j''y_k\omega_{ijk}.
\end{align*}
We need to show that this is equal to
\begin{displaymath}
    \sum_{i,j}(x_i'+x_i'')y_j\omega_{ij} + \sum_k\sum_{i<j}(x_i'+x_i'')(x_j'+x_j'')y_k\omega_{ijk}
    + \sum_i\sum_{j<k} (x_i'+x_i'')y_jy_k\omega_{ijk}.
\end{displaymath}
Clearly, the sums involving $\omega_{ij}$ agree, and so do the sums involving $y_jy_k$. Since
\begin{displaymath}
    \sum_{i<j}(x_i'+x_i'')(x_j'+x_j'') = \sum_{i<j}x_i'x_j' + \sum_{i<j}x_i''x_j'' + \sum_{i,j}x_i'x_j'',
\end{displaymath}
the remaining sums also agree.

Finally, we prove \eqref{Eq:f} by induction on $|x|$. There is again nothing to show when $|x|\le 1$, so suppose that $|x|\ge 2$ and let $x'$, $x''$ be such that $x=x'+x''$, $|x'|<|x|$ and $|x''|<|x|$.
%On the one hand,
By \eqref{Eq:Inductive}, the inductive assumption, \eqref{Eq:f3} and \eqref{Eq:f2},
\begin{align*}
    f(\sum_i(x_i'+x_i'')e_i) &= f(\sum_i x_i'e_i + \sum_i x_i''e_i)\\
    &= f(\sum_i x_i'e_i) + f(\sum_i x_i''e_i) + f_2(\sum_i x_i'e_i, \sum_j x_j''e_j)\\
    &=\sum_i x_i'\omega_i + \sum_{i<j}x_i'x_j'\omega_{ij} + \sum_{i<j<k}x_i'x_j'x_k'\omega_{ijk}\\
    &\quad+\sum_i x_i''\omega_i + \sum_{i<j} x_i''x_j''\omega_{ij} + \sum_{i<j<k}x_i''x_j''x_k''\omega_{ijk}\\
    &\quad+ \sum_{i,j}x_i'x_j''\omega_{ij} + \sum_k\sum_{i<j}x_i'x_j'x_k''\omega_{ijk} + \sum_i\sum_{j<k}x_i'x_j''x_k''\omega_{ijk}.
\end{align*}
We leave as an exercise for the reader to show that this is equal to
\begin{displaymath}
    \sum_i (x_i'+x_i'')\omega_i + \sum_{i<j}(x_i'+x_i'')(x_j'+x_j'')\omega_{ij} + \sum_{i<j<k}(x_i'+x_i'')(x_j'+x_j'')(x_k'+x_k'')\omega_{ijk}.
\qedhere
\end{displaymath}
\end{proof}

We now show the converse of Lemma \ref{Lm:ParamsDetermine}, namely that the parameters \eqref{Eq:Values} determine a map $f\in F_4^V$.

\begin{proposition}\label{Pr:Extension}
Let $V$ be a vector space over $F=\F_2$ with ordered basis $(e_1,\dots,e_d)$. Suppose that the parameters $\omega_{i_1\cdots i_k}$ of \eqref{Eq:Values} are given for every $1\le k\le 3$ and $1\le i_1<\cdots<i_k\le d$. Define $f_3:V^3\to F$, $f_2:V^2\to F$ and $f_1=f:V\to F$ according to \eqref{Eq:f3}, \eqref{Eq:f2} and \eqref{Eq:f}, respectively. Then $f_1$, $f_2$, $f_3$ are related by polarization and $f_4=0$.
\end{proposition}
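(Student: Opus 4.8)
The plan is to exhibit a single function and recognise the three maps of the statement as its derived forms. Concretely, let $g\colon V\to F$ be the map defined by the right-hand side of \eqref{Eq:f}; since that expression is a genuine function of the coordinates $(x_1,\dots,x_d)$ of $u=\sum_i x_ie_i$, this is well defined, and clearly $g(0)=0$. By the very definition of combinatorial polarization, the derived forms $g_1=g,g_2,g_3,\dots$ of $g$ are related by polarization, so it suffices to prove the three identities $g=f_1$ (immediate), $g_2=f_2$ and $g_3=f_3$, where $f_2,f_3$ denote the maps given by \eqref{Eq:f2} and \eqref{Eq:f3}, together with $g_4=0$. (Throughout I work with the second, manifestly symmetric, displayed forms of \eqref{Eq:f3} and \eqref{Eq:f2}, so that only parameters $\omega_{i_1\cdots i_k}$ with strictly increasing indices occur and symmetry of $f_2$, $f_3$ comes for free once they are identified with $g_2$, $g_3$.)

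First I would compute $g_2$ directly from \eqref{Eq:DerivedForm}: $g_2(u,v)=g(u+v)-g(u)-g(v)$ with $u=\sum_i x_ie_i$ and $v=\sum_i y_ie_i$. Substituting \eqref{Eq:f} and expanding in characteristic $2$, the linear part cancels, the quadratic part contributes $(x_i+y_i)(x_j+y_j)-x_ix_j-y_iy_j=x_iy_j+x_jy_i$ for each $i<j$, and the cubic part contributes the six mixed monomials $x_ix_jy_k+x_ix_ky_j+x_jx_ky_i+x_iy_jy_k+x_jy_iy_k+x_ky_iy_j$ for each $i<j<k$; comparing with \eqref{Eq:f2} shows $g_2=f_2$. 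This is precisely the computation in the proof of Proposition \ref{Pr:Formulas}, read in the opposite direction, and is essentially the ``exercise for the reader'' recorded there.

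Next I would obtain $g_3$ from the inductive relation \eqref{Eq:Inductive} for the derived forms of $g$ (valid because they are related by polarization): $g_3(v_1,v_2,v_3)=g_2(v_1+v_2,v_3)-g_2(v_1,v_3)-g_2(v_2,v_3)=f_2(v_1+v_2,v_3)-f_2(v_1,v_3)-f_2(v_2,v_3)$. Substituting the formula for $f_2$, every term that is affine in the first argument cancels, and the only surviving contribution is the genuinely quadratic-in-the-first-argument part, which by the identity $(x_i+y_i)(x_j+y_j)-x_ix_j-y_iy_j=x_iy_j+x_jy_i$ reproduces exactly the six-term symmetric expression of \eqref{Eq:f3}; hence $g_3=f_3$. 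Finally, the first displayed form of \eqref{Eq:f3} exhibits $f_3$ as trilinear, so one more application of \eqref{Eq:Inductive} gives $g_4(v_1,\dots,v_4)=f_3(v_1+v_2,v_3,v_4)-f_3(v_1,v_3,v_4)-f_3(v_2,v_3,v_4)=0$, and then $g_m=0$ for all $m\ge 4$ by induction via \eqref{Eq:Inductive}. Thus $f_1=g$, $f_2$, $f_3$ are the first three derived forms of $g$ and $f_4=0$, which is the assertion.

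I expect no real obstacle here: the argument is bookkeeping with characteristic-$2$ binomial expansions and is, in effect, the converse of Proposition \ref{Pr:Formulas}. The only points that call for a little care are keeping the index conventions straight (writing everything in terms of parameters with strictly increasing indices) and the final regrouping that matches the contribution $\sum_{i<j<k}\bigl((x_iy_j+x_jy_i)z_k+(x_iy_k+x_ky_i)z_j+(x_jy_k+x_ky_j)z_i\bigr)\omega_{ijk}$ coming from $g_3$ with the fully symmetrized form in \eqref{Eq:f3}.
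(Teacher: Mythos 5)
Your proposal is correct and is essentially the paper's own proof: both arguments reduce to verifying the two characteristic-$2$ identities $f_2(x,y)=f(x+y)-f(x)-f(y)$ and $f_3(x,y,z)=f_2(x+y,z)-f_2(x,z)-f_2(y,z)$ by expanding the polynomial formulas \eqref{Eq:f}, \eqref{Eq:f2}, \eqref{Eq:f3}, and then conclude $f_4=0$ from the trilinearity of $f_3$. The only difference is cosmetic packaging --- you start from $g$ defined by \eqref{Eq:f} and identify its derived forms with the given formulas, whereas the paper verifies the recurrence \eqref{Eq:Inductive} for the three given symmetric maps and invokes the converse criterion from the preliminaries; the computations are identical.
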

\begin{proof}
The formulas \eqref{Eq:f3}, \eqref{Eq:f2}, \eqref{Eq:f} produce symmetric maps $f_3$, $f_2$, $f_1$, respectively. To show that $f_1$, $f_2$, $f_3$ are related by polarization, it therefore suffices to check that \eqref{Eq:Inductive} holds. Let $x=\sum_i x_ie_i$, $y=\sum_j y_je_j$ and $z=\sum_k z_ke_k$.

The left hand side of
\begin{displaymath}
    f_3(x,y,z) = f_2(x+y,z)-f_2(x,z)-f_2(y,z)
\end{displaymath}
is equal to
\begin{displaymath}
    f_3(\sum_i x_ie_i,\sum_j y_je_j, \sum_k y_ke_k) = \sum_{i,j,k}x_iy_jz_k\omega_{ijk},
\end{displaymath}
while the right hand side is equal to
\begin{align*}
    f_2(\sum_i(x_i&+y_i)e_i,\sum_j z_je_j) - f_2(\sum_i x_ie_i,\sum_j z_je_j) - f_2(\sum_i y_ie_i,\sum_j z_je_j)\\
    &= \sum_{i,j}(x_i+y_i)z_j\omega_{ij} + \sum_k\sum_{i<j}(x_i+y_i)(x_j+y_j)z_k\omega_{ijk} + \sum_i\sum_{j<k}(x_i+y_i)z_jz_k\omega_{ijk}\\
    &\quad- \sum_{i,j}x_iz_j\omega_{ij} - \sum_k\sum_{i<j}x_ix_jz_k\omega_{ijk} - \sum_i\sum_{j<k}x_iz_jz_k\omega_{ijk}\\
    &\quad-\sum_{i,j}y_iz_j\omega_{ij} - \sum_k\sum_{i<j}y_iy_jz_k\omega_{ijk} - \sum_i\sum_{j<k} y_iz_jz_k\omega_{ijk}.
\end{align*}
In this expression, the summands involving $\omega_{ij}$ cancel, and so do all the summands involving $z_jz_k$. Therefore the right hand side reduces to
\begin{displaymath}
    \sum_k\sum_{i<j}(x_iy_j+y_ix_j)z_k\omega_{ijk},
\end{displaymath}
and  equality follows.

Similarly, in
\begin{displaymath}
    f_2(x,y) = f(x+y)-f(x)-f(y)
\end{displaymath}
the left hand side is equal to
\begin{displaymath}
    \sum_{i,j}x_iy_j\omega_{ij} + \sum_k\sum_{i<j} x_ix_jy_k\omega_{ijk} + \sum_i\sum_{j<k} x_iy_jy_k\omega_{ijk},
\end{displaymath}
while the right hand side is equal to
\begin{align*}
    f(\sum_i(x_i&+y_i)e_i) - f(\sum_i x_ie_i) - f(\sum_i y_ie_i)\\
    &=\sum_i(x_i+y_i)\omega_i + \sum_{i<j}(x_i+y_i)(x_j+y_j)\omega_{ij} + \sum_{i<j<k}(x_i+y_i)(x_j+y_j)(x_k+y_k)\omega_{ijk}\\
    &\quad-\sum_i x_i\omega_i - \sum_{i<j}x_ix_j\omega_{ij} - \sum_{i<j<k}x_ix_jx_k\omega_{ijk}\\
    &\quad-\sum_i y_i\omega_i - \sum_{i<j} y_iy_j\omega_{ij} - \sum_{i<j<k}y_iy_jy_k\omega_{ijk}.\\
\end{align*}
It can be seen easily that the two sides are equal.

Finally, since $f_3$ is trilinear and alternating, $f_4=0$.
\end{proof}

\subsection{The action of $GL(V)$ on the parameter space $\Omega_d$}\label{Ss:Action}

By Propositions \ref{Pr:Formulas} and \ref{Pr:Extension}, we can identity the space $F^V_4$ with the space of parameters
\begin{displaymath}
    \omega_i,\,\omega_{ij},\,\omega_{ijk}\in F,
\end{displaymath}
where $1\le i<j<k\le d$. We write the parameter space as
\begin{displaymath}
    \Omega_d = \bigoplus_I Fe_I,
\end{displaymath}
where the summation runs over all subsets $I=\{1,\dots,d\}$ such that $1\le |I|\le 3$. In particular, $\Omega_d$ has dimension $\binom{d}{1}+\binom{d}{2}+\binom{d}{3}$.

An element of $\Omega_d$ is written either as a sum $\sum_{I}\omega_Ie_I$, or as a tuple $(\omega_I)_I$, where the subsets $I$ are first ordered by cardinality and then lexicographically. For instance, an element of $\Omega_3$ is
\begin{displaymath}
    (\omega_1,\,\omega_2,\,\omega_3,\,\omega_{12},\,\omega_{13},\,\omega_{23},\,\omega_{123}).
\end{displaymath}

We now describe the action of $GL(V)$ on $\Omega_d$ that is equivalent to the natural action of $GL(V)$ on $F^V_4$.

\begin{proposition}\label{Pr:Action}
Let $V$ be a vector space over $F=\F_2$ with ordered basis $(e_1,\dots,e_d)$. Let $S\in GL(V)$ and $T=(t_{ij}) = S^{-1}$. Let $\omega\in \Omega_d$. The coordinates of $\omega^S$ are obtained as follows:
\begin{displaymath}
    \omega_{uvw}^S = \sum_{i<j<k}(t_{ui}t_{vj}t_{wk} + t_{ui}t_{vk}t_{wj} + t_{uj}t_{vi}t_{wk} + t_{uj}t_{vk}t_{wi} + t_{uk}t_{vi}t_{wj} + t_{uk}t_{vj}t_{wi})\omega_{ijk}
\end{displaymath}
for every $1\le u<v<w\le d$,
\begin{align*}
    \omega_{uv}^S = &\sum_{i<j}(t_{ui}t_{vj}+t_{uj}t_{vi})\omega_{ij}\\
    &\quad+ \sum_{i<j<k}(t_{ui}t_{uj}t_{vk} + t_{ui}t_{uk}t_{vj} + t_{uj}t_{uk}t_{vi} + t_{ui}t_{vj}t_{vk} + t_{uj}t_{vi}t_{vk} + t_{uk}t_{vi}t_{vj})\omega_{ijk}
\end{align*}
for every $1\le u<v\le d$, and
\begin{displaymath}
    \omega_u^S = \sum_i t_{ui}\omega_i + \sum_{i<j}t_{ui}t_{uj}\omega_{ij} + \sum_{i<j<k} t_{ui}t_{uj}t_{uk}\omega_{ijk}
\end{displaymath}
for every $1\le u\le d$.
\end{proposition}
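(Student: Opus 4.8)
The plan is to trace through the identification of $F^V_4$ with $\Omega_d$ supplied by Propositions \ref{Pr:Formulas} and \ref{Pr:Extension}, and to exploit the fact that combinatorial polarization commutes with linear substitutions. Recall that, by that identification, $\omega^S\in\Omega_d$ is by definition the parameter tuple of the map $g:=f^S\in F^V_4$, where $f$ is the map with parameter tuple $\omega$; thus $\omega_{i_1\cdots i_k}^S = g_k(e_{i_1},\dots,e_{i_k})$ for $1\le i_1<\cdots<i_k\le d$, in the notation of \eqref{Eq:Values}.

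The first and only substantive step is the identity
\begin{displaymath}
    g_m(v_1,\dots,v_m) = f_m(v_1S^{-1},\dots,v_mS^{-1})
\end{displaymath}
for every $m\ge1$ and $v_1,\dots,v_m\in V$. This is immediate from \eqref{Eq:DerivedForm}: since $S^{-1}$ is linear, $g(\sum_{i\in I}v_i) = f\bigl((\sum_{i\in I}v_i)S^{-1}\bigr) = f\bigl(\sum_{i\in I}v_iS^{-1}\bigr)$, and substituting this into \eqref{Eq:DerivedForm} reproduces the sum defining $f_m(v_1S^{-1},\dots,v_mS^{-1})$ term by term. Next I would record that, with $T=(t_{ij})=S^{-1}$ and the convention $f^S(v)=f(vS^{-1})$ applied to row vectors, one has $e_uS^{-1} = e_uT = \sum_i t_{ui}e_i$. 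Combining the two observations gives
\begin{displaymath}
    \omega_{uvw}^S = f_3\Bigl(\textstyle\sum_i t_{ui}e_i,\sum_j t_{vj}e_j,\sum_k t_{wk}e_k\Bigr),\quad
    \omega_{uv}^S = f_2\Bigl(\textstyle\sum_i t_{ui}e_i,\sum_j t_{vj}e_j\Bigr),\quad
    \omega_u^S = f\Bigl(\textstyle\sum_i t_{ui}e_i\Bigr).
\end{displaymath}
Since $f(0)=0$ and $f_4=0$, Proposition \ref{Pr:Formulas} applies, and the three displayed formulas of the proposition follow by substituting $x_i=t_{ui}$, $y_j=t_{vj}$, $z_k=t_{wk}$ into the second (fully symmetrised) expressions for $f_3$ and $f_2$ in \eqref{Eq:f3} and \eqref{Eq:f2}, and into \eqref{Eq:f} for $f$.

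I do not anticipate a genuine obstacle: once the commutation identity above is in hand, everything reduces to matching the six monomials in the $\omega_{ijk}$-coefficient of each formula against the corresponding symmetrised polynomial in Proposition \ref{Pr:Formulas}. The one point demanding care is index placement: one must keep $e_uS^{-1}$ equal to the $u$-th \emph{row} $(t_{u1},\dots,t_{ud})$ of $T$, not its $u$-th column, so that in the final formulas the first subscript of each $t$ is the ``new'' index $u,v,w$ and the second subscript is the ``old'' summation index $i,j,k$; confusing $T$ with $T^{\mathsf T}$ here would transpose the action.
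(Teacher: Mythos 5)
Your proposal is correct and follows essentially the same route as the paper: identify $\omega^S$ with the parameter tuple of $f^S$, observe that polarization commutes with the linear substitution so that $(f^S)_k(e_{i_1},\dots,e_{i_k}) = f_k(e_{i_1}T,\dots,e_{i_k}T)$ with $e_uT=\sum_i t_{ui}e_i$, and then read off the coefficients from Proposition \ref{Pr:Formulas}. The only difference is that you spell out the commutation identity $g_m(v_1,\dots,v_m)=f_m(v_1S^{-1},\dots,v_mS^{-1})$, which the paper uses implicitly when it writes $\omega_{uvw}^S=f_3^S(e_u,e_v,e_w)$.
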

\begin{proof}
Let $f\in F^V_4$ be the unique map such that \eqref{Eq:Values} holds for every $1\le i_1<i_2<i_3\le d$. Let $1\le u<v<w\le d$. Now \eqref{Eq:f3} implies that
\begin{displaymath}
    \omega_{uvw}^S = f_3^S(e_u,e_v,e_w) = f_3(e_uT,e_vT,e_wT) = f_3(\sum_i t_{ui}e_i,\sum_j t_{vj}e_j, \sum_k t_{wk}e_k)
\end{displaymath}
is equal to
\begin{displaymath}
     \sum_{i<j<k}(t_{ui}t_{vj}t_{wk} + t_{ui}t_{vk}t_{wj} + t_{uj}t_{vi}t_{wk} + t_{uj}t_{vk}t_{wi} + t_{uk}t_{vi}t_{wj} + t_{uk}t_{vj}t_{wi})\omega_{ijk},
\end{displaymath}
as claimed. The other two formulas follow analogously from \eqref{Eq:f2} and \eqref{Eq:f}, respectively.
\end{proof}

To understand the action of $GL(V)$ on $\Omega_d$, we decompose $\Omega_d$ as follows. For $1\le k\le 3$, let
\begin{displaymath}
    \Omega_d[k] = \sum_{|I|=k}F e_I,
\end{displaymath}
and for $\omega\in \Omega_d$ let $\omega[k]$ be the projection of $\omega$ onto $\Omega_d[k]$. Thus $\omega = \omega[1]\oplus\omega[2]\oplus\omega[3]$.

\begin{proposition}\label{Pr:ActionIsStratified}
Let $V$ be a vector space over $F=\F_2$ with ordered basis $(e_1,\dots,e_d)$, and let $S\in GL(V)$. For every $\omega\in \Omega_d$, there are square matrices
\begin{displaymath}
    N_1\in M_{\binom{d}{1}}(F),\quad N_2\in M_{\binom{d}{2}}(F),\quad
    N_3\in M_{\binom{d}{3}}(F)
\end{displaymath}
(which depend on $S$ but are independent of $\omega$) and vectors
\begin{displaymath}
    \nu_1(\omega[2],\omega[3])\in F^{\binom{d}{1}},\quad
    \nu_2(\omega[3])\in F^{\binom{d}{2}}
\end{displaymath}
(which depend on $S$ and the components of $\omega$ as indicated) such that
\begin{displaymath}
    \omega^S = (\omega[1]\oplus\omega[2]\oplus\omega[3])^S = (\omega[1]N_1+\nu_1(\omega[2],\omega[3]))
        \oplus(\omega[2]N_2 + \nu_2(\omega[3]))
        \oplus(\omega[3]N_3).
\end{displaymath}
\end{proposition}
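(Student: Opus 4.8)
The plan is to read the claimed decomposition straight off the three coordinate formulas of Proposition~\ref{Pr:Action}: those formulas already display exactly the triangular shape asserted here, and the present statement merely repackages them in the form needed for the iterative orbit computation of the next subsection. Throughout, write $T=(t_{ij})=S^{-1}$ as in Proposition~\ref{Pr:Action}, and identify $\Omega_d[k]$ with $F^{\binom{d}{k}}$ by listing the $k$-element subsets of $\{1,\dots,d\}$ lexicographically, so that elements of $\Omega_d[k]$ are row vectors.

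First I would handle the top stratum. The first formula of Proposition~\ref{Pr:Action} writes each coordinate $\omega^S_{uvw}$ as an $F$-linear combination of the coordinates $\omega_{ijk}$ of $\omega[3]$, with coefficients that are polynomials in the entries of $T$ and therefore depend on $S$ alone, not on $\omega$. Assembling these coefficients into the matrix $N_3\in M_{\binom{d}{3}}(F)$ whose $(\{i,j,k\},\{u,v,w\})$-entry is $t_{ui}t_{vj}t_{wk}+t_{ui}t_{vk}t_{wj}+t_{uj}t_{vi}t_{wk}+t_{uj}t_{vk}t_{wi}+t_{uk}t_{vi}t_{wj}+t_{uk}t_{vj}t_{wi}$ gives precisely $\omega^S[3]=\omega[3]N_3$.

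Next I would split the remaining two formulas of Proposition~\ref{Pr:Action} along the same grading. In the middle formula, the sum over $i<j$ is $F$-linear in the coordinates of $\omega[2]$ and so defines a matrix $N_2\in M_{\binom{d}{2}}(F)$ (with $(\{i,j\},\{u,v\})$-entry $t_{ui}t_{vj}+t_{uj}t_{vi}$), while the sum over $i<j<k$ is $F$-linear in the coordinates of $\omega[3]$ and defines the vector $\nu_2(\omega[3])\in F^{\binom{d}{2}}$; thus $\omega^S[2]=\omega[2]N_2+\nu_2(\omega[3])$. Likewise, in the last formula the term $\sum_i t_{ui}\omega_i$ is $F$-linear in $\omega[1]$ and defines $N_1\in M_{\binom{d}{1}}(F)$ (with $(\{i\},\{u\})$-entry $t_{ui}$), and the two remaining terms $\sum_{i<j}t_{ui}t_{uj}\omega_{ij}$ and $\sum_{i<j<k}t_{ui}t_{uj}t_{uk}\omega_{ijk}$ together define $\nu_1(\omega[2],\omega[3])\in F^{\binom{d}{1}}$, so that $\omega^S[1]=\omega[1]N_1+\nu_1(\omega[2],\omega[3])$. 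By construction $N_1,N_2,N_3$ depend only on $S$, while $\nu_2$ depends on $S$ and $\omega[3]$ and $\nu_1$ on $S$, $\omega[2]$ and $\omega[3]$; in fact $\nu_1$ and $\nu_2$ are $F$-linear in those components, which is what makes the induced actions on the lower strata affine.

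Assembling the three strata yields $\omega^S=(\omega[1]N_1+\nu_1(\omega[2],\omega[3]))\oplus(\omega[2]N_2+\nu_2(\omega[3]))\oplus(\omega[3]N_3)$, which is the claim. I expect no genuine obstacle: the substantive work was already done in Proposition~\ref{Pr:Action}, and what remains is bookkeeping --- fixing once and for all the orderings used to identify $\Omega_d[k]$ with $F^{\binom{d}{k}}$, keeping the row-versus-column and the $\omega^S=\omega N$ conventions consistent, and remembering that every $t_{ij}$ is an entry of $S^{-1}$ rather than of $S$. The one point deserving a sentence of care is verifying that the off-diagonal contributions land in the strata claimed (degree-$3$ parameters feeding into the degree-$2$ and degree-$1$ coordinates, degree-$2$ parameters into the degree-$1$ coordinates, and nothing flowing upward), which is immediate once the formulas of Proposition~\ref{Pr:Action} are grouped by the cardinality of the index set.
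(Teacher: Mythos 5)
Your proposal is correct and matches the paper's proof, which likewise derives the decomposition immediately from Proposition~\ref{Pr:Action} by reading off the matrix entries (e.g.\ the $(\{i,j\},\{u,v\})$-entry $t_{ui}t_{vj}+t_{uj}t_{vi}$ of $N_2$) and collecting the lower-degree contributions into $\nu_1$ and $\nu_2$. Your write-up is simply more explicit about all three strata, where the paper records only the $N_2$ and $\nu_2(\omega[3])$ case as a representative instance.
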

\begin{proof}
This follows immediately from Proposition \ref{Pr:Action}. For instance, with $T=(t_{ij}) = S^{-1}$, the entry in row $ij$ and column $uv$ of $N_2$ is $t_{ui}t_{vj}+t_{uj}t_{vi}$, and the entry in column $uv$ of $\nu_2(\omega[3])$ is
\begin{displaymath}
    \sum_{i<j<k}(t_{ui}t_{uj}t_{vk} + t_{ui}t_{uk}t_{vj} + t_{uj}t_{uk}t_{vi} + t_{ui}t_{vj}t_{vk} + t_{uj}t_{vi}t_{vk} + t_{uk}t_{vi}t_{vj})\omega_{ijk}.
\qedhere
\end{displaymath}
\end{proof}

In particular, the following hold.
\begin{itemize}
\item The restriction of the action of $G=GL(V)$ onto $\Omega_d[3]$
    induces a linear action on $\Omega_d[3]$, namely $\omega[3]^S = \omega[3]N_3$.
\item For $A\in \Omega_d[3]$, let $G_A$ be the stabilizer of $A$ under the above action of $G$ on $\Omega_d[3]$. Then $G_A$ induces an affine action on $\Omega_d[2]\oplus A$, namely $(\omega[2]\oplus A)^S = (\omega[2]N_2+\nu_2(A))\oplus A$.
\item For $A\in \Omega_d[3]$ and $C\in \Omega_d[2]$, let $G_{C\oplus A}$ be the stabilizer of $C$ under the above action of $G_A$ on $\Omega_d[2]\oplus A$. Then $G_{C\oplus A}$ induces an affine action on $\Omega_d[1]\oplus C\oplus A$, namely $(\omega[1]\oplus C\oplus A)^S = (\omega[1]N_1+\nu_1(C,A))\oplus C\oplus A$.
\end{itemize}

\subsection{Stratified group actions}\label{Ss:Stratified}

For a group $G$ acting on a set $X$, let $X/G$ denote the set of all orbit representatives of $G$ on $X$. We now describe the orbit representatives of actions that behave
analogously to the action of $GL(V)$ on $\Omega_d$.

Let $X=X_1\times\cdots\times X_m$ be a set and suppose that a group $G$ acts on $X$. The action of $G$ on $X$ is \emph{stratified} (with respect to the decomposition $X_1\times\cdots\times X_m$) if:
\begin{enumerate}
\item[(i)] for every $1\le i\le m$ the action of $G$ on $X$ induces an action on $X_i\times\cdots\times X_m$, and
\item[(ii)] for every $1< i\le m$ and every $(x_i,\dots,x_m)\in X_i\times\cdots\times X_m$ the stabilizer $G_{(x_i,\dots,x_m)}$ induces an action on $X_{i-1}\times (x_i,\dots,x_m)$.
\end{enumerate}

\begin{proposition}\label{Pr:Stratified}
If the action of a group $G$ on $X=X_1\times\cdots\times X_m$ is stratified,
then $X/G$ consists of all tuples $(x_1,\dots,x_m)$, where
\begin{displaymath}
    x_m\in X_m/G,\quad
    x_{m-1}\in (X_{m-1}\times x_m)/G_{x_m},\quad\dots\quad,\quad
    x_1\in (X_1\times (x_2,\dots,x_m))/G_{(x_2,\dots,x_m)}.
\end{displaymath}
\end{proposition}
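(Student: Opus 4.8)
The plan is to induct on $m$, the number of factors in $X = X_1\times\cdots\times X_m$. The case $m=1$ is vacuous, since the asserted description of $X/G$ then reads ``$x_1\in X_1/G$''. For the inductive step the engine is a standard observation about group actions, which I would isolate as a lemma: if a group $H$ acts on a set $X$ and $\pi\colon X\to Y$ is an $H$-equivariant surjection, then $O\mapsto \pi(O)$ is a surjection from the set of $H$-orbits on $X$ onto the set of $H$-orbits on $Y$, and for a fixed representative $y$ of an $H$-orbit on $Y$ the $H$-orbits $O$ on $X$ with $\pi(O)=Hy$ are in bijection with the $H_y$-orbits on the fibre $\pi^{-1}(y)$, via $O\mapsto O\cap\pi^{-1}(y)$. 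The points to check are that $H_y$ preserves $\pi^{-1}(y)$ (immediate from equivariance: $\pi(hx)=h\pi(x)=hy=y$ for $h\in H_y$), and that $O\cap\pi^{-1}(y)$ is a single $H_y$-orbit (if $x,x'\in O\cap\pi^{-1}(y)$ then $x'=hx$ for some $h\in H$, and applying $\pi$ forces $hy=y$, so $h\in H_y$); surjectivity of $O\mapsto\pi(O)$ and nonemptiness of $O\cap\pi^{-1}(y)$ are clear.

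Now take $\pi\colon X=X_1\times\cdots\times X_m\to Y=X_2\times\cdots\times X_m$ to be the projection onto the last $m-1$ coordinates. By clause (i) with $i=2$, the group $G$ acts on $Y$, and the induced action is exactly the one making $\pi$ equivariant. I would first verify that this action of $G$ on $Y$ is again stratified with respect to the decomposition $X_2\times\cdots\times X_m$: clause (i) for $Y$ follows from clause (i) for $X$ because the composite projection $X\to Y\to X_i\times\cdots\times X_m$ agrees with the projection $X\to X_i\times\cdots\times X_m$, so the two candidate actions on $X_i\times\cdots\times X_m$ coincide; and clause (ii) for $Y$ is clause (ii) for $X$ with the index $i$ restricted to $\{3,\dots,m\}$, the relevant stabilizers being the same subgroups of $G$ in both settings. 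By the inductive hypothesis, $Y/G$ therefore consists of the tuples $(x_2,\dots,x_m)$ with $x_m\in X_m/G$, $x_{m-1}\in(X_{m-1}\times x_m)/G_{x_m}$, $\dots$, $x_2\in(X_2\times(x_3,\dots,x_m))/G_{(x_3,\dots,x_m)}$.

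Finally I would apply the lemma with $H=G$ to the equivariant surjection $\pi$. For each representative $(x_2,\dots,x_m)\in Y/G$ the fibre is $\pi^{-1}(x_2,\dots,x_m)=X_1\times(x_2,\dots,x_m)$, on which $G_{(x_2,\dots,x_m)}$ acts by clause (ii) with $i=2$; the lemma says the $G$-orbits on $X$ lying over the $G$-orbit of $(x_2,\dots,x_m)$ are represented exactly by the elements of $\bigl(X_1\times(x_2,\dots,x_m)\bigr)/G_{(x_2,\dots,x_m)}$. Letting $(x_2,\dots,x_m)$ range over $Y/G$ and recording the first coordinate yields precisely the claimed description of $X/G$, completing the induction.

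I expect the only real friction to be bookkeeping rather than mathematics: one must be scrupulous that ``the action induced on $X_i\times\cdots\times X_m$'' denotes the same object whether obtained directly from $X$ or by first passing through $Y$, and that a stabilizer such as $G_{(x_2,\dots,x_m)}$ is the same subgroup of $G$ whether viewed as acting on $Y$ or on the fibre inside $X$. Making the key lemma precise — in particular tracking which stabilizer acts on which fibre — is where all the care goes; once it is in place the induction is mechanical.
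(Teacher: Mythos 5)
Your proof is correct and follows essentially the same route as the paper's: induction on $m$, with the two-factor step (which you package as a general lemma about an equivariant surjection and its fibres) doing the work of lifting orbit representatives from $X_2\times\cdots\times X_m$ to $X$. The paper phrases the inductive step by regrouping the last $m-1$ factors and invoking its $m=2$ case, which is exactly your fibration lemma specialized to the projection onto the second factor, so the two arguments differ only in bookkeeping.
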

\begin{proof}
We prove the claim by induction on $m$. If $m=1$, we need to show that $X/G = X_1/G$, which is certainly true.

Suppose that $m=2$ and let $(y_1,y_2)\in X_1\times X_2$. There is a unique $x_2\in X_2/G$ such that $y_2^G=x_2^G$. Let $g\in G$ and $z_1\in X_1$ be such that $(y_1,y_2)^g = (z_1,x_2)$. There is a unique $x_1\in X_1/G_{x_2}$ such that $(z_1,x_2)^{G_{x_2}} = (x_1,x_2)^{G_{x_2}}$.

Finally, suppose that $m>2$ and the claim is true for $m-1$. With $X_2' = X_2\times\cdots\times X_m$, we see that the action of $G$ is also stratified with respect to the decomposition $X_1\times X_2'$. The result follows from the case $m=2$ and the inductive assumption.
\end{proof}

By Proposition \ref{Pr:ActionIsStratified}, the action of $GL(V)$ on the parameter space $\Omega_d$ is stratified with respect to the decomposition $\Omega_d[1]\oplus\Omega_d[2]\oplus\Omega_d[3]$. We deduce the following from Proposition
\ref{Pr:Stratified} and Theorem \ref{Th:AH}.

\begin{corollary}\label{Cr:Main}
Let $d\ge 3$ and let $V$ be a vector space over $\F_2$ with ordered basis $(e_1,\dots,e_d)$. Then $G=GL(V)$ acts on the parameter space $\Omega_d$ as in Proposition $\ref{Pr:Action}$, and the orbits of $G$ are in one-to-one correspondence with code loops of order $2^{d+1}$ up to isomorphism. Moreover, $\Omega_d/G$ consists of all vectors $w[1]\oplus w[2]\oplus w[3]$ such that
\begin{displaymath}
    w[3]\in \Omega_d[3]/G,\quad
    w[2]\in (\Omega_d[2]\oplus w[3])/G_{w[3]},\quad
    w[1]\in (\Omega_d[1]\oplus w[2]\oplus w[3])/G_{w[2]\oplus w[3]}.
\end{displaymath}
\end{corollary}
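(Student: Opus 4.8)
The plan is to assemble Corollary~\ref{Cr:Main} from three ingredients already in hand: the identification of $F^V_4$ with the parameter space $\Omega_d$, Theorem~\ref{Th:AH}, and Proposition~\ref{Pr:Stratified} on stratified actions.

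First I would record that Propositions~\ref{Pr:Formulas} and~\ref{Pr:Extension} together furnish a bijection $F^V_4\to\Omega_d$ sending $f$ to the tuple $(\omega_I)_I$ of its parameters~\eqref{Eq:Values}, and that Proposition~\ref{Pr:Action} was set up precisely so that this bijection is $GL(V)$-equivariant: the formulas there compute the parameters of $f^S$ from those of $f$. Hence the action of $G=GL(V)$ on $\Omega_d$ described in Proposition~\ref{Pr:Action} is equivalent, as a $G$-set, to the natural action of $GL(V)$ on $F^V_4$. Combining this with Theorem~\ref{Th:AH} immediately yields the first assertion of the corollary: the orbits of $G$ on $\Omega_d$ are in one-to-one correspondence with code loops of order $2^{d+1}$ up to isomorphism.

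Next I would check that the action of $G$ on $\Omega_d=\Omega_d[1]\oplus\Omega_d[2]\oplus\Omega_d[3]$ is stratified in the sense of Section~\ref{Ss:Stratified}, taking $m=3$ and $X_1=\Omega_d[1]$, $X_2=\Omega_d[2]$, $X_3=\Omega_d[3]$. This is exactly the content of the three itemized consequences of Proposition~\ref{Pr:ActionIsStratified}: the first item, together with the block-triangular shape of $\omega\mapsto\omega^S$ exhibited in Proposition~\ref{Pr:ActionIsStratified}, says that $G$ induces actions on $\Omega_d[3]=X_3$ and on $\Omega_d[2]\oplus\Omega_d[3]=X_2\times X_3$, which is condition~(i); the second and third items say that for $A\in\Omega_d[3]$ the stabilizer $G_A$ acts on $\Omega_d[2]\oplus A$ and that for $C\in\Omega_d[2]$ the stabilizer $G_{C\oplus A}$ acts on $\Omega_d[1]\oplus C\oplus A$, which is condition~(ii) for $i=3$ and $i=2$. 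Proposition~\ref{Pr:Stratified} then applies verbatim and describes $\Omega_d/G$ as the set of triples $w[1]\oplus w[2]\oplus w[3]$ with $w[3]\in X_3/G=\Omega_d[3]/G$, then $w[2]\in (X_2\times w[3])/G_{w[3]}=(\Omega_d[2]\oplus w[3])/G_{w[3]}$, and finally $w[1]\in (X_1\times(w[2],w[3]))/G_{(w[2],w[3])}=(\Omega_d[1]\oplus w[2]\oplus w[3])/G_{w[2]\oplus w[3]}$, which is the displayed conclusion.

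I do not expect any genuine obstacle here: the statement is a bookkeeping corollary of Theorem~\ref{Th:AH} and Propositions~\ref{Pr:ActionIsStratified} and~\ref{Pr:Stratified}. The only points that deserve a sentence of care are (a) that the bijection $F^V_4\leftrightarrow\Omega_d$ is genuinely $G$-equivariant and not merely a bijection of sets, so that orbits transport correctly, and (b) keeping the indexing consistent: Proposition~\ref{Pr:Stratified} peels off $X_m$ first, so one must match its $X_3$-before-$X_2$-before-$X_1$ order against the $\Omega_d[3]$-before-$\Omega_d[2]$-before-$\Omega_d[1]$ order in which the corollary computes the orbit representatives.
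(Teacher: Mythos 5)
Your proposal is correct and follows essentially the same route as the paper: the authors likewise deduce the corollary by noting (via Proposition \ref{Pr:ActionIsStratified}) that the action of $GL(V)$ on $\Omega_d$ is stratified with respect to $\Omega_d[1]\oplus\Omega_d[2]\oplus\Omega_d[3]$ and then combining Proposition \ref{Pr:Stratified} with Theorem \ref{Th:AH}. Your additional remarks on the $G$-equivariance of the bijection $F^V_4\leftrightarrow\Omega_d$ and on matching the peeling order are sensible bookkeeping but introduce nothing beyond what the paper already relies on.
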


\subsection{Related actions}\label{Ss:RelatedActions}

Our enumeration is based on the usual action of $GL(V)$ on $F^V$, namely $\varphi(S)(f)=f^S$, $f^S(v) = f(vS^{-1})$. The reason for the inverse in the formula is best seen by considering a copy $W$ of $V$. If $S$ is a bijection $V\to W$ and $f\in F^V$, then $f^S\in F^W$ and we demand $f(v) = f^S(vS)$ for all $v\in V$, or, equivalently, $f^S(w) = f(wS^{-1})$ for all $w\in W$. By contrast, the enumeration of \cite{NagyVojtechovsky64} was based on the action $\psi(S)(f)(v) = f(vS^t)$, where $S^t$ is the transpose of $S$. We show that both actions yield the same orbits on $\Omega_d$ and hence the same code loops up to isomorphism.

\begin{lemma}\label{Lm:RelatedActions}
Let $G$ be a group and $\alpha$ an involutory automorphism of $G$. Let $\varphi$ be an action of $G$ on a set $X$, and let $\psi$ be the action of $G$ on $X$ defined by $\psi(g) = \varphi(g^\alpha)$. If $H\le G$ and $x\in X$,
then $\mathrm{Stab}(H,x,\varphi)^\alpha = \mathrm{Stab}(H^\alpha,x,\psi)$ and
$\mathrm{Orb}(H,x,\varphi) = \mathrm{Orb}(H^\alpha,x,\psi)$.
\end{lemma}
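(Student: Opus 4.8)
The plan is to unwind the definitions: the lemma is a purely formal consequence of $\alpha$ being an involutory automorphism. Recall that $\mathrm{Stab}(H,x,\varphi)=\{h\in H\ |\ \varphi(h)(x)=x\}$ and $\mathrm{Orb}(H,x,\varphi)=\{\varphi(h)(x)\ |\ h\in H\}$, with the analogous meanings for $\psi$. First I would note that $\psi$ really is an action, since $\alpha$ is a homomorphism: $\psi(gh)=\varphi((gh)^\alpha)=\varphi(g^\alpha h^\alpha)=\varphi(g^\alpha)\varphi(h^\alpha)=\psi(g)\psi(h)$ and $\psi(1)=\varphi(1^\alpha)=\varphi(1)=\mathrm{id}_X$. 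I would also record that $\alpha$, being an automorphism of $G$, restricts to a bijection of $H$ onto $H^\alpha$, so that every element of $H^\alpha$ is $h^\alpha$ for a unique $h\in H$.

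The one computation driving everything is the identity $\psi(h^\alpha)(x)=\varphi((h^\alpha)^\alpha)(x)=\varphi(h)(x)$, valid for every $h\in G$ and $x\in X$, where the last step uses $\alpha^2=\mathrm{id}$. The orbit statement then follows by reindexing: $\mathrm{Orb}(H^\alpha,x,\psi)=\{\psi(k)(x)\ |\ k\in H^\alpha\}=\{\psi(h^\alpha)(x)\ |\ h\in H\}=\{\varphi(h)(x)\ |\ h\in H\}=\mathrm{Orb}(H,x,\varphi)$, the third equality being the displayed identity. For the stabilizer statement the same identity shows that, for $h\in H$, one has $h^\alpha\in\mathrm{Stab}(H^\alpha,x,\psi)$ if and only if $\psi(h^\alpha)(x)=x$ if and only if $\varphi(h)(x)=x$ if and only if $h\in\mathrm{Stab}(H,x,\varphi)$; applying the bijection $\alpha$ to both sides then yields $\mathrm{Stab}(H^\alpha,x,\psi)=\{h^\alpha\ |\ h\in\mathrm{Stab}(H,x,\varphi)\}=\mathrm{Stab}(H,x,\varphi)^\alpha$.

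There is essentially no obstacle here. The only point that needs a little care is bookkeeping the twist by $\alpha$: one must use $\alpha^2=\mathrm{id}$ to cancel the two applications of $\alpha$ — the one built into $\psi$ and the one implicit in passing from $H$ to $H^\alpha$ — rather than accidentally introducing an $\alpha^{-1}$, and this is precisely the role of the involutory hypothesis. Everything else is direct substitution.
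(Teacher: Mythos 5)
Your proof is correct and follows essentially the same route as the paper's: both hinge on the single identity $\psi(h^\alpha)(x)=\varphi((h^\alpha)^\alpha)(x)=\varphi(h)(x)$ coming from $\alpha^2=\mathrm{id}$, and then verify the stabilizer equality in both directions and read off the orbit equality by reindexing over the bijection $h\mapsto h^\alpha$ from $H$ to $H^\alpha$. The extra remarks you include (that $\psi$ is indeed an action and that $\alpha$ restricts to a bijection $H\to H^\alpha$) are sound but are taken for granted in the paper.
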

\begin{proof}
If $g\in\mathrm{Stab}(H,x,\varphi)$ then
$g^\alpha\in H^\alpha$ and $\psi(g^\alpha)(x) = \varphi(g)(x) = x$,
so $g^\alpha\in\mathrm{Stab}(H^\alpha,x,\psi)$.
Conversely, if $g^\alpha\in\mathrm{Stab}(H^\alpha,x,\psi)$ then
$g\in H$ and $\varphi(g)(x)=\psi(g^\alpha)(x)=x$,
so $g^\alpha\in \mathrm{Stab}(H,x,\varphi)^\alpha$.
Thus $y = \varphi(g)(x)$ for some $g\in H$ if and only if
$y=\psi(g^\alpha)(x)$ for some $g^\alpha\in H^\alpha$.
\end{proof}

%Coming back to our situation,
Consider now the involutory automorphism $\alpha$ of $GL(V)$ given by $S^\alpha = S^{-t}$. Observe that
$\varphi(S^\alpha)(f)(v) = f(v(S^{-1})^{-t}) = f(vS^t) = \psi(S)(f)(v)$. Similar observations hold for the actions of $GL(V)$ on multivariate maps, which we also denote by $\varphi$ and $\psi$. Lemma \ref{Lm:RelatedActions} therefore applies, and we use it repeatedly.

If $A\in\Omega_d[3]$, then $H=\mathrm{Stab}(G,A,\varphi) = \mathrm{Stab}(G,A,\psi)^\alpha$ and $\mathrm{Orb}(G,A,\varphi) = \mathrm{Orb}(G,A,\psi)$.
If $C\in\Omega_d[2]$, then
$K = \mathrm{Stab}(H,C,\varphi) = \mathrm{Stab}(H^\alpha,C,\psi)^\alpha$ and $\mathrm{Orb}(H,C,\varphi) = \mathrm{Orb}(H^\alpha,C,\psi)$. Finally,
if $P\in\Omega_d[1]$, then $\mathrm{Stab}(K,P,\varphi) = \mathrm{Stab}(K^\alpha,P,\psi)^\alpha$ and $\mathrm{Orb}(K,P,\varphi) = \mathrm{Orb}(K^\alpha,P,\psi)$.

\section{The results}\label{Sc:Results}

\subsection{Code loops of given order}

\begin{table}[!ht]
\begin{displaymath}
    \begin{array}{|c|rrrrrrrrr|}
    \hline
    d       &0  &1  &2  &3  &4      &5      &6      &7      &8\\
    n       &2  &4  &8  &16 &32     &64     &128    &256    &512\\
    \hline
    \ell_n  &1  &2  &4  &10 &23     &88     &767    &80826  &937791557\\
    s_n     &1  &2  &4  &5  &7      &8      &10     &11     &13\\
    m_n     &1  &2  &5  &19 &122    &4529   &?      &?      &?\\
    g_n     &1  &2  &5  &14 &51     &267    &2328   &56092  &10494213\\
    \hline
    \end{array}
\end{displaymath}
\caption{Enumeration of certain classes of Moufang loops of order $n=2^{d+1}$ up to isomorphism}\label{Tb:Main}
\end{table}

Table \ref{Tb:Main} summarizes the results obtained by an algorithm based on Corollary \ref{Cr:Main}. For $n = 2^{d+1}$ with $0\le d\le 8$, Table \ref{Tb:Main} gives the number $\ell_n$ of code loops of order $n$ up to isomorphism and the number $s_n$ of small Frattini groups of order $n$ up to isomorphism. For comparison, we give the number $m_n$ of Moufang loops of order $n$ up to isomorphism and the number $g_n$ of groups of order $n$ up to isomorphism.

%\begin{figure}[!ht]
%\includegraphics[scale=0.5]{growth_of_code_loops.jpg}
%\caption{The number of groups (hollow blue dot) and code loops (solid black %dot) of order $2^{d+1}$ up to isomorphism for $0\le d\le 8$ (log-log %plot)}\label{Fg:Growth}
%\end{figure}

Only the numbers $\ell_{128}$, $\ell_{256}$, $\ell_{512}$ are new.
The numbers $g_n$ and $s_n$ are recorded in \cite{EickOBrien}.
%The numbers $s_n$ can be routinely calculated from the {\sc SmallGroups} library \cite{SmallGroups}.
(We can also obtain $s_n$ by applying our algorithm to vectors $\omega=\omega[1]\oplus\omega[2]\oplus\omega[3]$ with $\omega[3]=0$, that is, with trivial associator map.) It is known that all Moufang loops of order less than $12$ are associative \cite{Chein}. For $m_{16}$ and $m_{32}$, see \cite{Chein, GoodaireEtal}. We obtain $\ell_{16}$ and $\ell_{32}$ from
\cite{GoodaireEtal} or \texttt{LOOPS}. For $m_{64}$ and $\ell_{64}$, see \cite{NagyVojtechovsky64} or \texttt{LOOPS}.

%The series $g_n$ and $\ell_n$ are plotted in Figure \ref{Fg:Growth}.

\subsection{Code loops with a prescribed associator}

We now give more detailed computational results and also summarise
the techniques used in our computations. Most of the computations
were carried out using {\sf GAP} Version 4.7.9
on a computer with a 2.9 GHz processor.

For $n\le 128$, Corollary \ref{Cr:Main} can be routinely turned into an efficient algorithm. The running time for $n\le 64$ is in seconds, and for $n=128$ in minutes. For $n\ge 256$, the computational difficulties are considerable and additional improvements must be employed. We discuss the most difficult case
where $n=512$ and $d=8$, the case $256$ is similar.

%Classification of trilinear alternating forms over $\F_2$ up to equivalence is
%difficult for $d\ge 7$.
There are $12$ trilinear alternating forms in
dimension $7$ \cite{CohenHelminck}, and $32$ in
dimension $8$ \cite{HoraPudlak}.
Hora and Pudl\'ak \cite{HoraPudlak} used radical polynomials and
other invariants to construct $32$ inequivalent forms in
dimension $8$, and then employed the Orbit-Stabilizer Theorem to
check that their list of forms is complete.

We used the trilinear alternating forms of \cite{HoraPudlak} and independently verified
their stabilizer claims. For each trilinear alternating form $A$ in dimension $8$,
we used randomized techniques, similar to those described in \cite[\S 8]{ELO},
to construct a subgroup $H_A$ of the stabilizer $G_A$ of $A$ under
the action of $G=GL(8,\F_2)$. We used
the explicit matrix action of $G$ in dimension $\binom{8}{3}=56$ as given
in Proposition \ref{Pr:Action}. If $G_A$ is small, then this technique sometimes
produced only a proper subgroup $H_A$ of $G_A$; if so, we
constructed the stabilizer of the form explicitly in $N_G (H_A)$, so
obtaining a larger $H_A$.  Finally, we verified that $\sum_{A} [G:H_A] = 2^{56}$,
which implies that $H_A =G_A$ for all $A$. This calculation was carried
out using {\sc Magma} \cite{Magma} and took about 7 days of CPU time.

For each $A$ and $G_A$, we then calculated the orbits of the action of $G_A$ on the
$\binom{8}{2}=28$-dimensional vector space $\Omega_8[2]$ (more precisely, on the
coset $\Omega_8[2]\oplus A$), by converting the affine action of
Proposition \ref{Pr:Action} into a permutation action. It took about $2$ hours per
generator to construct the permutations for $G_A$, and then a few minutes
to calculate the orbits.

Since we calculated the orbits on $\Omega_8[2]\oplus A$ explicitly, we knew the
stabilizer sizes and thus could employ randomized techniques to calculate the
stabilizer for each orbit representative $C$ of $(\Omega_8[2]\oplus A)/G_A$.

Finally, the orbits of $G_{C\oplus A}$ on $\Omega_8[1]\oplus C\oplus A$, a set of cardinality $2^8$, were calculated. The difficulty here lies in the number of choices $C\oplus A$ that needed to be considered. In the most extreme case, the stabilizer of $A$ has cardinality $192$ (compared to $|G|=5348063769211699200$), and there are $1424416$ choices for $C$, resulting in $359052160$ code loops with associator $A$. This
% most difficult
case took several hours to complete.

\begin{remark}\label{Rm:Feasible}
Currently, Hora and Pudl\'ak seek to
classify the trilinear alternating forms on $V=\F_2^9$.
They report that among the trilinear alternating forms on $V$ is one having trivial automorphism group.
By Corollary \ref{Cr:Main}, this form alone
contributes $2^{\binom{9}{1}+\binom{9}{2}} = 2^{45}$
pairwise non-isomorphic code loops of order $2^{10}$.
By contrast, there are $49\,487\,365\,422 \leq 2^{36}$ groups of
order $2^{10}$ \cite{SmallGroups}.
We expect that there are also forms with very small automorphism groups;
the associated orbit calculations
required in Corollary \ref{Cr:Main} may be infeasible.

\end{remark}

\begin{table}[!htbp]
\begin{scriptsize}
\begin{displaymath}
\begin{array}{|rrrrrrr|}
    \hline
    d/n     &ID & A                                             & \text{factors of }G_A         & |G_A|                 & C_A       & \ell_A\\
    \hline
    \hline
    3/16    &0  & \emptyset                                     & L_3(2)                        & 168                   & 2         & 5\\
            &1  & 123                                           & L_3(2)                        & 168                   & 2         & 5\\
    \hline
    4/32    &0  & \emptyset                                     & L_4(2)                        & 20160                 & 3         & 7\\
            &1  & 123                                           & L_2(7),\,2^3                  & 1344                  & 4         & 16\\
    \hline
    5/64    &0  & \emptyset                                     & L_5(2)                        & 9999360               & 3         & 8\\
            &1  & 123                                           & L_2(7),\,2^7,\,3              & 64512                 & 7         & 33\\
            &2  & 123{+}345                                     & A_6,\,2^5                     & 11520                 & 9         & 47\\
    \hline
    6/128   &0  & \emptyset                                     & L_6(2)                        & 20158709760           & 4         & 10\\
            &1  & 123                                           & L_2(7)^2,\,2^9                & 14450688              & 10        & 52\\
            &2  & 123{+}345                                     & A_6,\,2^{10}                  & 368640                & 22        & 174\\
            &3  & 123{+}456                                     & L_2(7)^2,\,2                  & 56448                 & 20        & 224\\
            &4  & 123{+}345{+}156                               & L_2(7),\,2^8                  & 43008                 & 19        & 234\\
            &5  & 123{+}234{+}345{+}246{+}156                   & L_3(4),\,2,\,3                & 120960                & 10        & 73\\
    \hline
    7/256   &0  & \emptyset                                     & L_7(2)                        & 163849992929280       & 4         & 11\\
            &1  & 123                                           & L_2(7),\,A_8,\,2^{12}         & 13872660480           & 13        & 72\\
            &2  & 123{+}345                                     & A_6,\,2^{16},\,3              & 70778880              & 40        & 381\\
            &3  & 123{+}456                                     & L_2(7)^2,\,2^7                & 3612672               & 53        & 903\\
            &4  & 123{+}345{+}156                               & L_2(7),\,2^{14}               & 2752512               & 57        & 968\\
            &5  & 123{+}234{+}345{+}246{+}156                   & L_3(4),\,2^7,\,3              & 7741440               & 23        & 269\\
            &6  & 123{+}345{+}567                               & 2^{13},\,3^2                  & 73728                 & 289       & 10019\\
            &7  & 123{+}145{+}167{+}357                         & L_2(7),\,2^{12}               & 688128                & 69        & 1459\\
            &8  & 123{+}167{+}246{+}357                         & 2^{10},\,3^2                  & 9216                  & 634       & 39916\\
            &9  & 123{+}145{+}167                               & Sp_6(2),\,2^6                 & 92897280              & 23        & 167\\
            &10 & 123{+}145{+}167{+}246{+}357                   & U_3(3),\,2                    & 12096                 & 324       & 25052\\
            &11 & 123{+}234{+}345{+}246{+}156{+}367             & A_5,\,2^{11},\,3              & 368640                & 67        & 1609\\
    \hline
    8/512   &0  & \emptyset                                     & L_8(2)                        & 5348063769211699200   & 5         & 13\\
            &1  & 123                                           & L_2(7),\,L_5(2),\,2^{15}      & 55046716784640        & 16        & 92\\
            &2  & 123{+}345                                     & L_2(7),\,A_6,\,2^{20}         & 63417876480           & 59        & 627\\
            &3  & 123{+}456                                     & L_2(7)^2,\,2^{14},\,3         & 1387266048            & 104       & 2040\\
            &4  & 123{+}345{+}156                               & L_2(7),\,2^{21},\,3           & 1056964608            & 110       & 2181\\
            &5  & 123{+}234{+}345{+}246{+}156                   & L_3(4),\,2^{14},\,3^2         & 2972712960            & 46        & 603\\
            &6  & 123{+}345{+}567                               & 2^{20},\,3^2                  & 9437184               & 910       & 42058\\
            &7  & 123{+}145{+}167{+}357                         & L_2(7),\,2^{19}               & 88080384              & 213       & 6157\\
            &8  & 123{+}167{+}246{+}357                         & 2^{17},\,3^2                  & 1179648               & 1968      & 162636\\
            &9  & 123{+}145{+}167                               & Sp_6(2),\,2^{13}              & 11890851840           & 59        & 655\\
            &10 & 123{+}145{+}167{+}246{+}357                   & U_3(3),\,2^8                  & 1548288               & 978       & 100396\\
            &11 & 123{+}234{+}345{+}246{+}156{+}367             & A_5,\,2^{18},\,3              & 47185920              & 201       & 6588\\
            &12 & 123{+}345{+}678                               & L_2(7),\,A_6,\,2^5            & 1935360               & 942       & 76858\\
            &13 & 123{+}145{+}178{+}246                         & 2^{18},\,3^2                  & 2359296               & 1175      & 72552\\
            &14 & 123{+}145{+}268{+}347                         & 2^{11},\,3^2                  & 18432                 & 25352     & 4553608\\
            &15 & 123{+}345{+}567{+}178                         & 2^{15},\,3^2                  & 294912                & 3121      & 340812\\
            &16 & 123{+}145{+}168{+}246{+}257                   & 2^{17},\,3                    & 393216                & 2718      & 269244\\
            &17 & 123{+}145{+}168{+}347{+}256                   & 2^{10},\,3                    & 3072                  & 108136    & 24014336\\
            &18 & 123{+}145{+}168{+}347{+}267                   & 2^{14},\,3                    & 49152                 & 9050      & 1597720\\
            &19 & 123{+}145{+}278{+}356{+}467                   & 2^8,\,3^2                     & 2304                  & 129180    & 30780784\\
            &20 & 123{+}145{+}178{+}246{+}258{+}347             & 2^{13},\,3                    & 24576                 & 14252     & 2962796\\
            &21 & 123{+}145{+}168{+}347{+}258{+}267             & 2^6,\,3                       & 192                   & 1424416   & 359052160\\
            &22 & 123{+}145{+}257{+}278{+}368{+}467             & L_2(7),\,2                    & 336                   & 808692    & 204763400\\
            &23 & 123{+}145{+}168{+}246{+}257{+}356{+}456       & A_5,\,2^{13},\,3              & 1474560               & 718       & 65885\\
            &24 & 123{+}145{+}257{+}258{+}268{+}348{+}467       & L_2(8),\,2^6,\,3              & 96768                 & 3392      & 732448\\
            &25 & 123{+}145{+}167{+}178{+}258{+}267{+}347{+}356 & 2^9,\,3,\,7                   & 10752                 & 25952     & 6424768\\
            &26 & 123{+}145{+}178{+}246{+}258{+}347{+}356{+}456 & 2^4,\,3^3                     & 432                   & 628452    & 159271112\\
            &27 & 123{+}145{+}258{+}356{+}478{+}567             & 2^8,\,3                       & 768                   & 381912    & 92169184\\
            &28 & 123{+}145{+}167{+}246{+}257{+}267{+}368       & 2^{10},\,3                    & 3072                  & 99452     & 23205904\\
            &29 & 123{+}145{+}246{+}468{+}578                   & 2^{10},\,3                    & 3072                  & 109276    & 24331744\\
            &30 & 123{+}145{+}258{+}347{+}368{+}567             & A_5,\,2^8,\,3                 & 46080                 & 9132      & 1686096\\
            &31 & 123{+}145{+}457{+}678                         & 2^{10},\,3^4                  & 82944                 & 6982      & 1096100\\
        \hline
\end{array}
\end{displaymath}
\end{scriptsize}
\caption{The number $\ell_A$ of code loops with associator map $A$}\label{Tb:CLA}
\end{table}

The detailed results for $16\le n=2^{d+1} \le 512$ are summarized in Table \ref{Tb:CLA}. In the first column we give the dimension $d$ of the underlying vector space over $\F_2$ and the order $n=2^{d+1}$ of the resulting code loops, in the second column we list the ID for a trilinear alternating form $A$ on $\F_2^d$ (an eventual associator map), in the third column we list the trilinear alternating form $A$ (our numbering and choice of basis follow those of \cite{HoraPudlak}), in the fourth column we list composition factors, with multiplicities,
of the stabilizer $G_A$ of $A$ in $GL(d,\F_2)=L_d(2)$ (using the standard notation of \cite{Atlas}), in the fifth column we give the order of $G_A$, in the sixth column we give the number of orbits of $G_A$ on $\Omega_d[2]\oplus A$ (eventual commutator maps), and in the last column we give the number $\ell_A$ of code loops with associator $A$ up to isomorphism.

If $d\le 7$ or $|G_A|>10000$,
then we stored all orbit representatives of the action
of $GL(d,\F_2)$ on $\Omega_d$, from which the code loops can be explicitly constructed
using the method of \cite{Nagy}. In the remaining situations, we only counted
the number of representatives. More detailed data files are available on request from the second author.

\subsection{Specific code loops and their automorphism groups}

We conclude by commenting on specific code loops and their automorphisms.

\begin{lemma}\label{Lm:AuxMoufang}
Let $Q$ be a Moufang loop and let $Z$ be a cyclic central subloop of $Q$ such that $Q/Z$ is at most $2$-generator. Then $Q$ is a group.
\end{lemma}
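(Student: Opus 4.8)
The plan is to reduce the statement to a result about Moufang loops generated by two elements together with a central element, and to exploit the fact that in a Moufang loop any two elements generate a group, so that nonassociativity can only arise from a genuinely three-generator configuration. Write $Q/Z = \langle \bar x, \bar y\rangle$, and lift $\bar x$, $\bar y$ to elements $x$, $y\in Q$. I would first argue that $Q = \langle x, y, Z\rangle$: indeed every element of $Q$ lies in some coset $wZ$ with $w$ a word in $x$, $y$, and since $Z$ is central, such a word together with $Z$ generates the same subloop as $x$, $y$ and $Z$ do. So it suffices to show that $\langle x, y, Z\rangle$ is associative.

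Since $Z$ is central and cyclic, pick a generator $z$ of $Z$ (if $Z$ is trivial there is nothing to prove, as $\langle x,y\rangle$ is a group). The key computation is to show that $x$, $y$ and $z$ associate in every order, since by Moufang's theorem (quoted in the Preliminaries: if $x(yz)=(xy)z$ for three elements then they generate a group) this forces $\langle x,y,z\rangle$ to be a group. Here I would use that $z$ is central: for central $z$ one has $x(yz) = x(zy) = (xz)y = \dots$, and more to the point the associator $A(a,b,c)$ vanishes whenever one of $a,b,c$ lies in the center, because by definition of the center $c\in Z(Q)$ gives $(ab)c = (a(bc))$ directly, and similarly for the other two slots. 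Hence $A(x,y,z)=1$, so $x$, $y$, $z$ generate a group, and therefore $Q=\langle x,y,z\rangle$ is a group.

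The one genuine subtlety is the reduction step $Q=\langle x,y,Z\rangle$: in a general loop, "$Q/Z$ is $2$-generator" does not automatically lift, because preimages of a generating set of a quotient need not generate, and loop words are not associative so one must be careful that adjoining $Z$ really recaptures all of $Q$. This is where the hypotheses that $Z$ is \emph{central} and that we are inside a Moufang loop do the work: central elements can be pulled to the front of any parenthesized word, so a word in $x$, $y$ and elements of $Z$ equals (a word in $x$, $y$) multiplied by an element of $Z$, and conversely any element of $Q$ is such a product because its image in $Q/Z$ is a word in $\bar x,\bar y$. I expect this bookkeeping — formalizing "every element of $Q$ is a word in $x,y$ times an element of $Z$" using only the Moufang identity and centrality of $Z$ — to be the main obstacle, though it is routine once set up correctly. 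The cyclicity of $Z$ is used only to guarantee $Z=\langle z\rangle$ so that the final appeal to Moufang's theorem on $\langle x,y,z\rangle$ literally gives a group containing all of $Q$; without cyclicity one would instead iterate, adjoining generators of $Z$ one at a time and using at each stage that a new central generator associates with everything already present.
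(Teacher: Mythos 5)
Your proposal is correct and follows essentially the same route as the paper: lift the two generators of $Q/Z$ to $x,y\in Q$, take a generator $z$ of $Z$ so that $Q=\langle x,y,z\rangle$, observe $A(x,y,z)=1$ because $z$ is central, and conclude by Moufang's theorem. The reduction step you flag as the ``main obstacle'' is in fact routine (any subloop containing the kernel $Z$ and surjecting onto $Q/Z$ equals $Q$), which is why the paper states $Q=\langle x,y,z\rangle$ without further comment.
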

\begin{proof}
Let $Z=\langle z\rangle$ and $Q/Z=\langle xZ,yZ\rangle$. Then $Q=\langle x,y,z\rangle$ and $A(x,y,z)=1$ because $z$ is central. By Moufang's Theorem
\cite{Moufang}, $Q$ is a group.
\end{proof}

\begin{lemma}\label{Lm:16}
Every nonassociative Moufang loop of order $16$ is a code loop.
\end{lemma}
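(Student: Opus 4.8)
The plan is to exhibit $Q$ as a symplectic $2$-loop and then appeal to Hsu's identification of symplectic Moufang $2$-loops with code loops. Since $|Q|=16=2^4$, the loop $Q$ is a Moufang $2$-loop, hence centrally nilpotent by Glauberman and Wright \cite{Glauberman, GlaubermanWright}; in particular $Z(Q)\ne 1$, and as $Q$ is a $2$-loop we may pick a central involution $z$. Set $Z=\langle z\rangle$, a cyclic central subloop of order $2$, so that $Q/Z$ is a Moufang loop of order $8$.

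First I would show that $Q/Z$ is elementary abelian. By \cite{Chein}, every Moufang loop of order less than $12$ is associative, so $Q/Z$ is a group of order $8$. Since $Q$ is not a group, Lemma \ref{Lm:AuxMoufang} applied to the cyclic central subloop $Z$ forces $Q/Z$ not to be at most $2$-generator; thus $Q/Z$ needs at least three generators. But a group of order $2^3$ generated by no fewer than three elements has trivial Frattini subgroup (Burnside basis theorem), so $Q/Z\cong\F_2^3$ is elementary abelian.

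Then I would assemble the conclusion. The previous step displays $Q$ as a central extension of the $2$-element group $(\F_2,+)$ by the elementary abelian $2$-group $(\F_2^3,+)$, i.e.\ $Q$ is a symplectic $2$-loop in the sense of Aschbacher; being Moufang, it is a symplectic Moufang $2$-loop, and by Hsu \cite{Hsu} these are exactly the code loops. (Alternatively, and more directly: since any two elements of a Moufang loop generate a group, the nonassociative $Q$ cannot be $2$-generator, so $|\Phi(Q)|=16/|Q/\Phi(Q)|\le 2$ by Bruck's description \cite{Bruck} of $\Phi(Q)$; as $Q$ is not elementary abelian we have $\Phi(Q)\ne 1$, hence $|\Phi(Q)|=2$ and $Q$ is a small Frattini Moufang $2$-loop, thus a code loop by \cite{Hsu}.)

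I do not expect a serious obstacle: all the substantive input---central nilpotence of Moufang $2$-loops, the associativity of Moufang loops of order below $12$, and the coincidence of symplectic (equivalently small Frattini) Moufang $2$-loops with code loops---is quoted from the literature. The only elementary point requiring attention is that an order-$8$ group needing three generators must be elementary abelian, which is immediate from the Burnside basis theorem once \cite{Chein} guarantees that $Q/Z$ is a group.
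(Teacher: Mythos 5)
Your main argument is correct and follows essentially the same route as the paper: pick a central subloop $Z$ of order $2$, show $Q/Z$ is a group of order $8$ that is not $2$-generated (via Lemma \ref{Lm:AuxMoufang}), conclude it is elementary abelian, and invoke Hsu's identification of symplectic Moufang $2$-loops with code loops. The one cosmetic difference is that you quote Chein's classification to see that $Q/Z$ is associative, whereas the paper keeps the argument self-contained by first deducing from Lemma \ref{Lm:AuxMoufang} that every Moufang loop of order at most $8$ is a group (any order-$2$ central subloop there has quotient of order at most $4$, hence $2$-generated); either justification is fine. Your parenthetical alternative is a genuinely different and slightly slicker route: since a nonassociative Moufang loop cannot be $2$-generated, Bruck's description of $\Phi(Q)$ forces $|\Phi(Q)|\le 2$, so $Q$ is small Frattini and hence a code loop by \cite{Hsu} directly, without ever exhibiting the symplectic structure. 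That version avoids Lemma \ref{Lm:AuxMoufang} entirely (the two-generated-subloops-are-groups fact substitutes for it), at the cost of relying on the small Frattini characterization rather than the symplectic one; both characterizations are already quoted in Section \ref{Sc:Preliminaries}, so nothing is lost.
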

\begin{proof}
Recall that every Moufang $2$-loop $Q$ is centrally nilpotent \cite{GlaubermanWright}. If $|Q|\le 8$ then there is $Z\le Z(Q)$ of order $2$ such that $|Q/Z|\le 4$, so $Q/Z$ is at most $2$-generator and $Q$ is a group by Lemma \ref{Lm:AuxMoufang}.

Suppose now that $|Q|=16$ and $Q$ is not associative. Let $Z\le Z(Q)$ have order $2$. Then $Q/Z$ is a group of order $8$ by the first paragraph, and $Q/Z$ is not generated by any two of its elements by Lemma \ref{Lm:AuxMoufang}. Hence $Q/Z$ is the elementary abelian group of order $8$.
\end{proof}

There are $5$ nonassociative code loops of order $16$ according to Table \ref{Tb:CLA}, and thus precisely $5$ nonassociative Moufang loops of order $16$ by Lemma \ref{Lm:16}. This agrees with the classification of \cite{Chein}.
The automorphism groups of these $5$ loops are as follows:
soluble groups of order $2^4\,3$, $2^6$ and $2^6\,3$ (twice);
%a group of order $48$ with composition factors $2^4$, $3$;
%a group of order $64$ with composition factors $2^6$;
%two groups of order $192$ with composition factors $2^6$, $3$;
and a group of order $1344$ with composition factors $L_2(7)$, $2^3$. The last loop is the \emph{octonion loop} $\mathbb O_{16}$ that captures the multiplication rules among the eight basic octonionic units and their additive inverses.

There are $71$ nonassociative Moufang loops of order $32$; among them are $16$
code loops. The unique such code loop with the largest automorphism group (and
composition factors $L_2(7)$, $2^7$) is the direct product of
$\mathbb O_{16}$ with the cyclic group of order $2$.

The most famous code loop, the \emph{Parker loop} $\mathcal P$, is obtained as the code loop of the extended binary Golay code $\mathcal G_{24}$ of dimension $12$ (and hence is not found in our classification). The automorphism group of $\mathcal G_{24}$ is the Mathieu group $M_{24}$, and the group of so-called \emph{standard} automorphisms of $\mathcal P$ (those automorphisms that belong to $M_{24}$ when signs of elements of $\mathcal P$ are ignored) has structure $2^{12}\cdot M_{24}$ \cite[Chapter 29]{ConwaySloane}.

A potentially rewarding
research program is to investigate nonassociative
code loops with ``large" or ``interesting" automorphism groups.

\section*{Acknowledgment}

Petr Vojt\v{e}chovsk\'y thanks the Department of Mathematics, University of Auckland,
for its
hospitality and for access to its high performance computing facility.

The {\sf GAP} code used in the implementation of our algorithm, although completely
rewritten and highly
optimized for speed, originated from the code of \cite{NagyVojtechovsky64}.

We thank G\'abor Nagy, the referee and editor for helpful comments and suggestions.

\end{document}